\newcommand{\Mdef}[2]{\newcommand{#1}{\relax \ifmmode #2 \else $#2$\fi}}
\newcommand{\sm }{\wedge}
\newcommand{\tensor}{\otimes}
\Mdef{\bhom}{\mathbf{\hat{H}om}}
\Mdef{\Mod}{\mathrm{mod}}
\newtheorem{thm}{Theorem}[section]
\newtheorem{lemma}[thm]{Lemma}
\newtheorem{prop}[thm]{Proposition}
\newtheorem{cor}[thm]{Corollary}
\theoremstyle{definition}
\newtheorem{example}[thm]{Example}
\newtheorem{remark}[thm]{Remark}
\newcommand{\qqed}{\qed \\[1ex]}
\renewenvironment{proof}[1][\hspace*{-.8ex}]{\noindent {\bf Proof #1:\;}}{\qqed}
\Mdef{\PH} {\Phi^H}
\Mdef{\PK} {\Phi^K}
\Mdef{\PL} {\Phi^L}
\Mdef{\PT} {\Phi^{\T}}
\Mdef{\ef}{E{\cF}_+}
\Mdef{\etf}{\widetilde{E}{\cF}}
\Mdef{\eg}{E{G}_+}
\Mdef{\etg}{\tilde{E}{G}}
\Mdef{\infl}{\mathrm{inf}}
\Mdef{\defl}{\mathrm{def}}
\Mdef{\res}{\mathrm{res}}
\Mdef{\ind}{\mathrm{ind}}
\Mdef{\coind}{\mathrm{coind}}
\Mdef{\univ}{\mathcal{U}}
\Mdef{\Fp}{\mathbb{F}_p}
\Mdef{\Zpinfty}{\Z /p^{\infty}}
\Mdef{\Zpadic}{\Z_p^{\wedge}}
\newcommand{\bi}{\begin{itemize}}
\newcommand{\be}{\begin{enumerate}}
\newcommand{\bc}{\begin{center}}
\newcommand{\bd}{\begin{description}}
\newcommand{\ei}{\end{itemize}}
\newcommand{\ee}{\end{enumerate}}
\newcommand{\ec}{\end{center}}
\newcommand{\ed}{\end{description}}
\newcommand{\adjunction}[4]{
\diagram
#1:#2 \rrto<0.7ex> &&
#3  \llto<0.7ex> :#4 
\enddiagram}
\newcommand{\lra}{\longrightarrow}
\newcommand{\lla}{\longleftarrow}
\Mdef{\we}{\mathbf{we}}
\Mdef{\fib}{\mathbf{fib}}
\Mdef{\cof}{\mathbf{cof}}
\Mdef{\BI}{\mathcal{BI}}
\newcommand{\ilim}{\mathop{ \mathop{\mathrm{lim}} \limits_\leftarrow} \nolimits}
\newcommand{\colim}{\mathop{  \mathop{\mathrm {lim}} \limits_\rightarrow} \nolimits}
\Mdef{\A}{\mathbb{A}}
\Mdef{\B}{\mathbb{B}}
\Mdef{\C}{\mathbb{C}}
\Mdef{\D}{\mathbb{D}}
\Mdef{\E}{\mathbb{E}}
\Mdef{\T}{\mathbb{T}}
\Mdef{\F}{\mathbb{F}}
\Mdef{\G}{\mathbb{G}}
\Mdef{\I}{\mathbb{I}}
\Mdef{\N}{\mathbb{N}}
\Mdef{\Q}{\mathbb{Q}}
\Mdef{\R}{\mathbb{R}}
\Mdef{\bbS}{\mathbb{S}}
\Mdef{\Z}{\mathbb{Z}}
\Mdef{\bA}{\mathbb{A}}
\Mdef{\bB}{\mathbb{B}}
\Mdef{\bC}{\mathbb{C}}
\Mdef{\bD}{\mathbb{D}}
\Mdef{\bE}{\mathbb{E}}
\Mdef{\bF}{\mathbb{F}}
\Mdef{\bG}{\mathbb{G}}
\Mdef{\bH}{\mathbb{H}}
\Mdef{\bI}{\mathbb{I}}
\Mdef{\bJ}{\mathbb{J}}
\Mdef{\bK}{\mathbb{K}}
\Mdef{\bL}{\mathbb{L}}
\Mdef{\bM}{\mathbb{M}}
\Mdef{\bN}{\mathbb{N}}
\Mdef{\bO}{\mathbb{O}}
\Mdef{\bP}{\mathbb{P}}
\Mdef{\bQ}{\mathbb{Q}}
\Mdef{\bR}{\mathbb{R}}
\Mdef{\bS}{\mathbb{S}}
\Mdef{\bT}{\mathbb{T}}
\Mdef{\bU}{\mathbb{U}}
\Mdef{\bV}{\mathbb{V}}
\Mdef{\bW}{\mathbb{W}}
\Mdef{\bX}{\mathbb{X}}
\Mdef{\bY}{\mathbb{Y}}
\Mdef{\bZ}{\mathbb{Z}}
\Mdef{\cA}{\mathcal{A}}
\Mdef{\cB}{\mathcal{B}}
\Mdef{\cC}{\mathcal{C}}
\Mdef{\mcD}{\mathcal{D}} 
\Mdef{\cE}{\mathcal{E}}
\Mdef{\cF}{\mathcal{F}}
\Mdef{\cG}{\mathcal{G}}
\Mdef{\mcH}{\mathcal{H}} 
\Mdef{\cI}{\mathcal{I}}
\Mdef{\cJ}{\mathcal{J}}
\Mdef{\cK}{\mathcal{K}}
\Mdef{\mcL}{\mathcal{L}}
\Mdef{\cM}{\mathcal{M}}
\Mdef{\cN}{\mathcal{N}}
\Mdef{\cO}{\mathcal{O}}
\Mdef{\cP}{\mathcal{P}}
\Mdef{\cQ}{\mathcal{Q}}
\Mdef{\mcR}{\mathcal{R}}
\Mdef{\cS}{\mathcal{S}}
\Mdef{\cT}{\mathcal{T}}
\Mdef{\cU}{\mathcal{U}}
\Mdef{\cV}{\mathcal{V}}
\Mdef{\cW}{\mathcal{W}}
\Mdef{\cX}{\mathcal{X}}
\Mdef{\cY}{\mathcal{Y}}
\Mdef{\cZ}{\mathcal{Z}}
\Mdef{\At}{\tilde{A}}
\Mdef{\Bt}{\tilde{B}}
\Mdef{\Ct}{\tilde{C}}
\Mdef{\Et}{\tilde{E}}
\Mdef{\Ht}{\tilde{H}}
\Mdef{\Kt}{\tilde{K}}
\Mdef{\Lt}{\tilde{L}}
\Mdef{\Mt}{\tilde{M}}
\Mdef{\Nt}{\tilde{N}}
\Mdef{\Pt}{\tilde{P}}
\Mdef{\tA}{\tilde{A}}
\Mdef{\tB}{\tilde{B}}
\Mdef{\tC}{\tilde{C}}
\Mdef{\tE}{\tilde{E}}
\Mdef{\tH}{\tilde{H}}
\Mdef{\tK}{\tilde{K}}
\Mdef{\tL}{\tilde{L}}
\Mdef{\tM}{\tilde{M}}
\Mdef{\tN}{\tilde{N}}
\Mdef{\tP}{\tilde{P}}
\Mdef{\ft}{\tilde{f}}
\Mdef{\xt}{\tilde{x}}
\Mdef{\yt}{\tilde{y}}
\Mdef{\Ab}{\overline{A}}
\Mdef{\Bb}{\overline{B}}
\Mdef{\Cb}{\overline{C}}
\Mdef{\Db}{\overline{D}}
\Mdef{\Eb}{\overline{E}}
\Mdef{\Fb}{\overline{F}}
\Mdef{\Gb}{\overline{G}}
\Mdef{\Hb}{\overline{H}}
\Mdef{\Ib}{\overline{I}}
\Mdef{\Jb}{\overline{J}}
\Mdef{\Kb}{\overline{K}}
\Mdef{\Lb}{\overline{L}}
\Mdef{\Mb}{\overline{M}}
\Mdef{\Nb}{\overline{N}}
\Mdef{\Ob}{\overline{O}}
\Mdef{\Pb}{\overline{P}}
\Mdef{\Qb}{\overline{Q}}
\Mdef{\Rb}{\overline{R}}
\Mdef{\Sb}{\overline{S}}
\Mdef{\Tb}{\overline{T}}
\Mdef{\Ub}{\overline{U}}
\Mdef{\Vb}{\overline{V}}
\Mdef{\Wb}{\overline{W}}
\Mdef{\Xb}{\overline{X}}
\Mdef{\Yb}{\overline{Y}}
\Mdef{\Zb}{\overline{Z}}
\Mdef{\db}{\overline{d}}
\Mdef{\hb}{\overline{h}}
\Mdef{\qb}{\overline{q}}
\Mdef{\rb}{\overline{r}}
\Mdef{\tb}{\overline{t}}
\Mdef{\ub}{\overline{u}}
\Mdef{\vb}{\overline{v}}
\Mdef{\hc}{\hat{c}}
\Mdef{\he}{\hat{e}}
\Mdef{\hf}{\hat{f}}
\Mdef{\hA}{\hat{A}}
\Mdef{\hH}{\hat{H}}
\Mdef{\hJ}{\hat{J}}
\Mdef{\hM}{\hat{M}}
\Mdef{\hP}{\hat{P}}
\Mdef{\hQ}{\hat{Q}}
\Mdef{\thetab}{\overline{\theta}}
\Mdef{\phib}{\overline{\phi}}
\Mdef{\uA}{\underline{A}}
\Mdef{\uB}{\underline{B}}
\Mdef{\uC}{\underline{C}}
\Mdef{\uD}{\underline{D}}
\Mdef{\bolda}{\mathbf{a}}
\Mdef{\boldb}{\mathbf{b}}
\Mdef{\boldD}{\mathbf{D}}
\Mdef{\fm}{\frak{m}}
\Mdef{\fp}{\frak{p}}
\Mdef{\eps}{\epsilon}
\newcommand{\mccM}{\mathcal{M}}
\renewcommand{\Et}{\cE_t}
\newcommand{\dcell}{\mbox{-cell-}}
\newcommand{\modcat}[1]{\mbox{$#1$-mod}}
\newcommand{\mccR}{R}
\newcommand{\Categ}{\mathrm{Cat}}
\newcommand{\ccI}{\bfD}
\newcommand{\ccIp}{\bfD_+}
\newcommand{\bbarmccR}{\hat{\mccR}}
\newcommand{\fmccR}{f{\mccR}}
\newcommand{\fRp}{f{\mccR}_+}
\newcommand{\bfD}{\mathbf{D}}
\newcommand{\bfE}{\mathbf{E}}
\newcommand{\MD}{\mccM|_{\bfD}}
\begin{document}
\title{Homotopy theory of modules over diagrams of rings} 
\author{J.~P.~C.~Greenlees}
\address{Department of Pure Mathematics, The Hicks Building, 
Sheffield S3 7RH. UK.}
\email{j.greenlees@sheffield.ac.uk}

\author{B.~Shipley}
\thanks{The first author is grateful for support under EPSRC grant
  number EP/H040692/1.   This material is based upon work by the second author supported by the National Science Foundation under Grant No. DMS-1104396.
}
\address{Department of Mathematics, Statistics and Computer Science, University of Illinois at
Chicago, 508 SEO m/c 249,
851 S. Morgan Street,
Chicago, IL, 60607-7045, USA}
\email{bshipley@math.uic.edu}

\subjclass[2010]{55U35, 55P60, 55P91}
\date{}
\maketitle

\begin{abstract}
Given a diagram of rings, one may consider the category of modules
over them. We are interested in the homotopy theory of categories of
this type: given a suitable diagram of model categories $\mccM(s)$ (as
$s$ runs through the diagram), we consider the category of diagrams
where the object $X(s)$ at $s$ comes from $\mccM(s)$. We develop model structures on such categories
of diagrams,  and Quillen adjunctions that relate categories based on
different diagram shapes.  

Under certain conditions, cellularizations (or right Bousfield localizations) of these adjunctions induce Quillen equivalences.  As an application we show that a cellularization of a category of modules over a diagram of ring spectra (or differential graded rings) is Quillen equivalent to modules over the associated inverse limit of the rings.  Another application of the general machinery here is given in work by the authors on algebraic models of rational equivariant spectra.  Some of this material originally appeared in the preprint ``An algebraic model for rational torus-equivariant stable homotopy theory", arXiv:1101.2511,  but has been generalized here.  
%
%
\end{abstract}

\section{Introduction}
Given a diagram of rings, one may consider the category of modules
over them. We are interested in the homotopy theory of categories of
this type: given a suitable diagram of model categories $\mccM(s)$
where functors relating them are left Quillen functors,
 we consider the category of diagrams where the object $X(s)$ at $s$ comes
from $\mccM(s)$. The purpose of this paper is to show that under suitable hypotheses,  there are
diagram-projective and diagram-injective model structures on the
category (Theorem \ref{thm.injective.model}), and
to investigate Quillen adjunctions associated to restricting the
diagram (Theorems \ref{thm5.3} and \ref{thm5.4}). 

\subsection{Motivation}
This paper grew out of our project on algebraic models for rational
$G$-equivariant spectra for $G$ a torus~\cite{tnq3}.  The main result
of that project 
is to show that the homotopy theory of rational $G$-spectra is
modeled by an algebraic category of diagrams, and it is worth describing the
strategy to illustrate the use of the techniques developed in the
present paper. 
We begin by showing that the category of $G$-spectra is modelled by a
diagram of modules  over equivariant ring spectra and in the end
we show that this is modelled by a category of diagrams of
differential graded modules over  graded rings.    Some of the necessary generality is
slightly hidden here, since in the spectral part we must consider a context where
not only the ring, but also the group of equivariance varies with the
position in the diagram. 
With this generality, which motivates the setting of this current paper, we are able to describe the various models we use. 

The next issue is that the shape of the diagram of equivariant spectra 
we start with  is different from the shape of the diagram of
differential graded rings we end with.
To relate categories based on these  two diagram shapes we construct a
larger diagram shape category which contains the smaller ones, so that a
diagram  based on the new larger shape restricts to two smaller diagrams 
of the original shapes. We then 
show that a suitable inclusion of diagram shapes induces a Quillen
adjunction, and apply  the Cellularization Principle \cite{GScell} to show that it induces a 
Quillen equivalence after cellularization. 
 
Since the techniques of using diagrams of categories and of changing
the diagram shapes can be generalized and applied in other settings, 
we have decided to present  it separately here in appropriate
generality and we refer to~\cite{tnq3new} for the original application.

\subsection{Organization}
In Sections \ref{sec:ringsandmodules} and \ref{sec:model}, we develop
model structures for categories of direct or inverse diagrams where
the model category from which the objects come
varies with the position in the diagram. In 
Section \ref{sec-inverse}, as an example, we consider diagrams of
modules over a diagram of ring spectra (or differential graded
rings). A particularly well-known example is (differential graded)
modules over the classical Hasse square which considers the integers
as the pull-back ring of the rationals, the $p$-adic integers for all
primes $p$, and their tensor products.  Using the Cellularization
Principle \cite{GScell} (see also Appendix~\ref{app:cell}), we show
that modules over the homotopy inverse limit of a given diagram of
rings can be modelled by the cellularization of the category of modules
over the diagram of rings (Proposition \ref{prop-gen-pb}).  This is
the model category version of the local to global principle for the Hasse square.  

In Section \ref{sec:adjoints}, we consider changing diagram shapes.  In particular, we consider the inclusion of a subcategory $i: \bfD \to \bfE$ and its induced restriction functor on diagram categories over $\bfD$ and $\bfE$.  We then show that after certain cellularizations (or co-localizations) these different shaped diagram categories model the same homotopy theory.   At the end of Section \ref{sec:adjoints}, we return to the inverse limit example of Section \ref{sec-inverse} to show that it is an example of this general machinery for changing diagram shapes.

\section{Diagrams of rings and modules}
\label{sec:ringsandmodules}

Categories of modules over diagrams of
rings have created useful new models; see for example~\cite{ss2, tnq3}.   These examples use two underlying contexts:
differential graded modules over differential graded algebras (DGAs)  and module spectra over ring spectra. 
In~\cite{tnq3}, we  needed to generalize this setting further to work with equivariant spectra.
   
\subsection{The archetype}
\label{sec:archetype}
Given a diagram shape $\bfD$, consider a diagram of rings $R:\bfD \lra \C$ in a symmetric monoidal category $\C$.  
Each map $R(a): R(s)\lra R(t)$ gives rise to an extension of scalars functor
$$ \modcat{R(s)}\stackrel{a_{*}}\lra \modcat{R(t)} $$
defined by $a_*(X)=R(t)\tensor_{R(s)}X$, 
 with right adjoint the restriction of scalars functor
$$ \modcat{R(s)}\stackrel{a^*}\lla \modcat{R(t)}. $$

Now consider a category of {\em $R$-modules}; 
the objects are diagrams $X: \bfD \lra \C$ for which $X(s)$ is an
$R(s)$-module for each object $s$, 
and for every morphism $a: s\lra t$ in $\bfD$, the map $X(a): X(s) \lra X(t)$ 
is a {\em module map over the ring map} $R(a): R(s) \lra R(t)$. More
precisely, there is a map $X(s) \lra a^*X(t)$ of $R(s)$-modules (the {\em
 restriction} model)  or,  equivalently, there is a map
$$R(t)\tensor_{R(s)} X(s) =a_*X(s) \lra X(t) $$
of modules over the ring $R(t)$ (the {\em extension of  scalars}
model).  Although restriction of scalars seems very simple,  in the
more general case, it is more natural to view the left adjoint
$a_*$ as the primary one.

\subsection{A generalization}
\label{subsec:modcatdiag}
We next consider a 
generalization of this archetype. Here we begin with a diagram 
$\mccM: \bfD \lra \Categ$ of categories.
The previous special case is $\mccM(s)=\modcat{R(s)}$
and in our applications each category $\mccM(s)$ is a  category of
modules in some  category $\C(s)$ which also varies with $s$. 
Since $\mccM$ is a functor, for each $a:s\lra t$ in $\bfD$ we have an
associated functor $a_* :\mccM (s) \lra \mccM (t)$ which is compatible
with composition in $\bfD$.   
We then consider the category of $\mccM$-diagrams, $\modcat{\mccM}$.  The objects in this category consist of an 
object $X(s)$ from $\mccM (s)$ for each object $s$ of $\bfD$ with a transitive system of morphisms
$$\widetilde{X}(a): a_*X(s) \lra X(t)$$
for each morphism $a:s\lra t$ in $\bfD$
(the {\em left adjoint} form). 
If  each $a_*$ has a right adjoint $a^*$, then the system of morphisms is equivalent to an adjoint system of morphisms
$$\hat{X}(a): X(s) \lra a^*X(t)$$ for each morphism $a:s\lra t$ in $\bfD$
(the {\em right adjoint} form).

\subsection{Model structures}\label{subsec:modstructure}
We say that $\mccM$ is {\em a diagram of model categories} if each category
$\mccM (s)$ has a model structure, the functors $a_*$ all have right
adjoints and  the adjoint pair $a_*\vdash a^*$ of functors relating the 
model categories form a Quillen pair. 

For instance, the motivating example of a diagram of rings gives a diagram of model
categories if we use the projective model structure on the category 
$\mccM (s)$ of  $R(s)$-modules.

When $\mccM$ is a diagram of model categories, there are two ways to attempt to put a model structure on 
the category of $\mccM$-diagrams $\{ X(s)\}_{s \in \bfD}$. The
{\em diagram-projective} model structure (if it exists) has its fibrations and weak  
equivalences defined  objectwise. The {\em diagram-injective} model
structure  (if it exists) has its cofibrations and 
weak  equivalences defined objectwise.  It must be checked in each 
particular case whether or not these specifications determine a model
structure. When both model structures exist, it is clear that the identity functors
define a Quillen equivalence between them.

We will prove Theorem \ref{thm.injective.model} stating that 
the diagram-projective and diagram-injective model structures exist for certain diagram shapes $\bfD$.

\subsection{Simple change of diagrams}
\label{subsec:elemequivalences}
Returning to the archetype diagram of modules over a diagram of rings, 
sometimes if we specify the modules
on just part of the diagram we can fill in the remaining entries
using adjoints. There are two types of examples: 
(1) the diagram is filled in by using left adjoints such as extension of scalars and direct limits and (2) the diagram is filled in using right adjoints such as restriction and inverse limits.  In both cases, this sometimes induces a Quillen equivalence 
between  subcategories of modules over the larger and smaller diagrams.  
In Section~\ref{sec-inverse}, we develop an example of type (2) of a Quillen equivalence of module categories.  
In Section~\ref{sec:adjoints}, we develop general statements for both types (1) and (2) in the setting of diagrams of module categories as in Section~\ref{subsec:modcatdiag}.

\begin{example}\label{ex1}
The simplest example of Type (1) starts with a diagram $R=(R_0\lra R_1)$ of
rings. An $R$-module gives rise to an $R_0$-module by evaluation at
the first object. An $R_0$-module $X_0$ produces the $R$-module
$$\left( R\tensor_{R_0}X_0 \right)
=\left( R_0\tensor_{R_0}X_0\lra R_1\tensor_{R_0}X_0 \right) .$$ 
\end{example}

\begin{example}\label{ex2}
The simplest example of Type (2) starts with a diagram $R=(R_0\lra
R_{01}\lla R_1)$ of rings. If we let $\hat{R}$ be defined by  the pullback
$$\diagram
\hat{R}\rto \dto&  R_0\dto \\
R_1 \rto & R_{01}
\enddiagram$$
  an $R$-module gives rise to an $\hat{R}$-module by 
pullback. An $\hat{R}$-module $\hat{X}$ produces the $R$-module
$$\diagram
&  R_0\tensor_{\hat{R}}\hat{X}\dto \\
R_1\tensor_{\hat{R}}\hat{X} \rto & R_{01}\tensor_{\hat{R}} \hat{X} . 
\enddiagram$$
\end{example}

Returning to the general case in more detail, we let $i: \bfD \lra \bfE$ be the inclusion of 
a subdiagram, and $R: \bfE \lra \C$ be a diagram of
rings.  We restrict $R$ to  a diagram $R|_{\bfD}: \bfD \lra \C$,
and this induces a restriction functor 
$$i^*: \modcat{R}\lra \modcat{R|_{\bfD}}. $$
We discuss two cases in detail in Section~\ref{sec:adjoints}, depending on whether we focus on $i^*$ as a right or left
adjoint. 

\vspace{1ex}
\noindent
{\em The Left Adjoint Case.}
If $i^*$ has a  left adjoint $i_*$ and we consider diagram-projective model structures (with objectwise weak equivalences and fibrations) on the two categories, then
 the adjunction $(i_*, i^*)$  is a Quillen pair.  

In fact for a diagram $M$ on $\bfD$, we may identify $i_*M$
explicitly. To find its value at an object $t$ of $\bfE$ we consider the category $\bfD /t$
whose objects are morphisms $s\lra t$ in $\bfE$ with $s$ in $\bfD$ and then take
$i_*M(t)=\colim_{s\in \bfD/t} a_* M(s)$ (closely related to the latching
object at $t$).  In particular, if objects of $\bfD$ have no automorphisms  
and $s$ is in $\bfD$, then $\mbox{id}_s$ is a terminal object of $\bfD /s$ and $i_*$ will not
change the value at $s$. In this case,  $i_*$ leaves the entries in
$\bfD$  unchanged and  the  unit $M\lra i^*i_*M$ is an isomorphism.

\vspace{1ex}
\noindent
{\em The Right Adjoint Case.}
Similarly, if $i^*$ has a right adjoint, $i_!$, and we consider
diagram-injective model structures (with objectwise weak equivalences and cofibrations) then the adjunction
$(i^*, i_!)$ is a Quillen pair. 

In fact for a diagram $M$ on $\bfD$, we may identify $i_!M$
explicitly. To find its value at an object $t$ of $\bfE$,  we consider the category $t /  \bfD$
whose objects are morphisms $t\lra s$ in $\bfE$ with $s$ in $\bfD$, and then take
$i_!M(s)=\ilim_{s\in t/ \bfD} a^* M(s)$ (closely related to the
matching object at $t$).  In particular, if objects of $\bfD$ have no automorphisms 
and $s$ is in $\bfD$ then $\mbox{id}_s$ is an initial object of $s / \bfD$ and $i_!$ will not
change the value at $s$. In this case,  $i_!$ leaves the entries in
$\bfD$  unchanged  and the  counit $i^*i_!M\lra M$ is an  isomorphism.

\section{Diagram-injective model structures}\label{sec:model}

In this section we develop {\em diagram-projective} and  {\em diagram-injective} model structures 
for the generalized categories of diagrams defined in Section~\ref{subsec:modcatdiag}.   As we see in Remark~\ref{rem:many.objects} below, one familiar example of such a generalized category of diagrams is the category of 
modules over a ring with many objects, \cite[3.3.2]{ss2}.  
In that case, the {\em diagram-projective} (or {\em standard})
model structure here agrees with the one developed in \cite[A.1.1]{ss2} and has objectwise
weak equivalences and fibrations; see also \cite{A}. 
In contrast, the diagram-injective model structure here has weak equivalences and cofibrations determined at each object.
These are the analogues of the model structures for diagrams over direct and inverse
small categories developed, for example, in \cite[5.1.3]{hovey-model}.  

We restrict our attention here to the diagrams indexed on
small direct (or inverse) categories.  
Let $\ccI$ be a small direct category 
with a fixed linear extension $d: \ccI \to \lambda$ for some ordinal $\lambda$. 
Note that if $\ccI(s,t)$ is non-empty and $s \not = t$  then $d(s) < d(t).$
Let $\mccM$ be a diagram of model categories indexed by $\ccI$; that is, 
each $s \in \ccI$ is assigned a model category $\mccM(s)$ and each $a: s \to t$ in $\ccI$ is assigned
a left Quillen functor $a_{*}: \mccM(s) \to \mccM(t)$ (with right adjoint $a^{*}$) which are compatible with composition. 
Then a diagram $X$ over $\mccM$ (or ``$\mccM$-diagram") specifies
 for each object $s$ in $\ccI$ an object $X(s)$ of $\mccM(s)$ and for each morphism $a: s \to t$ in
 $\ccI$ a map $\widetilde{X}(a): a_{*}X(s) \to X(t)$, again compatible with compositions.
Let $\ccI_{t}$ be the category whose objects are all non-identity maps  in $\ccI$ with codomain $t$.
Then any diagram $X$ induces a functor 
from $\ccI_{t}$ to $\mccM(t)$ by
taking $a:s \to t$ in $\ccI_{t}$ to $a_{*}X(s)$.
Define the {\em latching space} functor, $L_t X$ as the direct limit in $\mccM(t)$,
\[ L_t X = \colim_{\ccI_{t}} a_{*}X(s).\]

In the dual situation where $\ccI$ is a small inverse category, we consider 
again a diagram of model categories $\mccM$.
Note, here again each $a: s \to t$ in $\ccI$ is assigned to
a left Quillen functor $a_{*}: \mccM(s) \to \mccM(t)$ with right adjoint $a^{*}$.
Let $\ccI^{s}$ be the category of all non-identity maps  in $\ccI$ with domain $s$.
Then any $\mccM$ diagram $X$ induces a functor from $\ccI^{s}$ to $\mccM(s)$ by taking $a:s \to t$ in $\ccI^{s}$ to $a^{*}X(t)$.
Define the {\em matching space} functor, $M_i X$ as the inverse limit in $\mccM(i)$,
\[ M_s X = \ilim_{\ccI^{s}} a^{*}X(t).\]

\begin{thm} \label{thm.injective.model}
Assume given a category $\ccI$ and a diagram of model categories, $\mccM$, indexed
on $\ccI$ as above.   

(i) If $\ccI$ is a direct category, then there is a {\em diagram-projective} model structure on the category of diagrams over $\mccM$ with
objectwise weak equivalences and fibrations; that is,
$X \to Y$ is a weak equivalence (or fibration) if $X(s) \to Y(s)$ is an
underlying weak equivalence (or fibration) in $\mccM(s)$ for all $s$.    This map is a
(trivial) cofibration if and only if the induced map $X(s)\coprod_{L_{s}X} L_{s}Y \to Y(s)$ is a (trivial) cofibration in $\mccM(s)$ for all $s$.

(ii) If $\ccI$ is an inverse category, then there is
a {\em diagram-injective} model structure on the category of diagrams over $\mccM$ with
objectwise weak equivalences and cofibrations; that is,
$X \to Y$ is a weak equivalence (or cofibration) if $X(s) \to Y(s)$ is an
underlying weak equivalence (or cofibration) in $\mccM(s)$ for all $s$.    This map is a
(trivial) fibration if and only if the induced map $X(s)\ \to Y(s) \times_{M_{s}Y} M_{s}X$ is a (trivial) fibration in $\mccM(s)$ for all $s$.
\end{thm}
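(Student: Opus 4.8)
The plan is to prove (i) directly, constructing the diagram-projective model structure by transfinite induction along the chosen linear extension $d\colon \ccI\to\lambda$, in the manner of the direct-category (``Reedy'') model structure on diagrams valued in a fixed model category, while carefully tracking that the target $\mccM(t)$ now varies with $t$. Part (ii) will then follow from (i) by passing to opposite categories. Indeed, if $\ccI$ is an inverse category then $\ccI^{\mathrm{op}}$ is direct; the assignment $s\mapsto\mccM(s)^{\mathrm{op}}$, with the functor $a^{*}\colon \mccM(t)^{\mathrm{op}}\to\mccM(s)^{\mathrm{op}}$ attached to $a^{\mathrm{op}}\colon t\to s$, is a diagram of model categories over $\ccI^{\mathrm{op}}$ (here one uses that $a^{*}$, being right adjoint to the left Quillen functor $a_{*}$, is right Quillen, hence passes to a left Quillen functor on opposites with right adjoint $a_{*}$); an $\mccM$-diagram over $\ccI$ in right-adjoint form is literally an $\mccM(-)^{\mathrm{op}}$-diagram over $\ccI^{\mathrm{op}}$ in left-adjoint form; the matching space $M_{s}X$ becomes the latching space of the opposite diagram; and the diagram-injective model structure asserted in (ii), with its characterization of fibrations via relative matching maps, is exactly the opposite of the diagram-projective model structure of (i), with its characterization of cofibrations via relative latching maps. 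So it suffices to prove (i).

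For (i), take the three classes exactly as in the statement. The crux is a pair of lemmas, proved by induction on $d$. Lemma A: for each object $t$, the latching functor $X\mapsto L_{t}X$, from the category of $\mccM$-diagrams over the full subcategory $\ccI_{<d(t)}\subseteq\ccI$ of objects of degree $<d(t)$ (equipped with the diagram-projective structure already constructed at that stage) to $\mccM(t)$, preserves cofibrations and trivial cofibrations. Lemma B: a map of $\mccM$-diagrams is a cofibration that is also an objectwise weak equivalence if and only if all of its relative latching maps are trivial cofibrations. Lemma A is established by expressing $L_{t}X$, via the standard transfinite filtration built up by degree, as a sequential colimit of pushouts, each a copy of some $a_{*}$ applied to a relative latching map of the restricted diagram, and invoking that each $a_{*}$ is left Quillen, hence preserves cofibrations, trivial cofibrations and colimits; Lemma B then follows from Lemma A together with the gluing lemma in $\mccM(t)$, by induction on degree, just as in the classical fixed-target case. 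These two lemmas isolate everything special about the generalization.

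Granting Lemmas A and B, the model axioms are verified in the usual order. All limits and colimits of $\mccM$-diagrams exist and are computed objectwise, using completeness and cocompleteness of each $\mccM(s)$ and the fact that each $a_{*}$ preserves colimits; two-out-of-three and the retract axiom for weak equivalences and fibrations are objectwise, and the relative-latching-cofibration condition is plainly closed under retracts. For factorizations, given $f\colon X\to Y$ one builds $X\to Z\to Y$ by induction on $d$: with $Z$, and its maps to and from $X$ and $Y$, already defined on objects of degree $<\alpha$, one defines it on an object $t$ of degree $\alpha$ by forming the pushout $P_{t}=X(t)\coprod_{L_{t}X}L_{t}Z$ in $\mccM(t)$ (note $L_{t}Z$ is already available), taking the evident map $P_{t}\to Y(t)$, and factoring it in $\mccM(t)$ as a cofibration followed by a trivial fibration --- or as a trivial cofibration followed by a fibration --- to obtain $Z(t)$; the colimit description of $L_{t}Z$ turns $Z$ into a genuine $\mccM$-diagram and makes $X\to Z\to Y$ maps of diagrams, while Lemma B identifies the middle map as being of the required kind. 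The lifting axiom is handled identically: given a cofibration $i\colon A\to B$ and a fibration $p\colon E\to X$, one of them trivial, one constructs a lift $B\to E$ by induction on $d$, at an object $t$ solving the lifting problem in $\mccM(t)$ whose left-hand map is the relative latching map of $i$ at $t$ (a cofibration, trivial if $i$ is, by Lemma B) and whose right-hand map is $E(t)\to X(t)$ (a fibration, trivial if $p$ is), the rest of the square being supplied by the partial lift through the latching objects.

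I expect the main obstacle to be Lemmas A and B. The difficulty is organizational rather than conceptual: the proof of (i) is really a single transfinite induction on $\lambda$ whose inductive statement at stage $\alpha$ bundles together ``the diagram-projective structure exists on $\mccM$-diagrams over $\ccI_{<\alpha}$'', ``Lemma A holds for objects of degree $\alpha$'', and ``Lemma B holds for maps of diagrams over $\ccI_{<\alpha}$'', and one must set up the filtration of $L_{t}X$ by iterated latching objects and feed restricted diagrams back into this hypothesis with some care. The one genuinely new ingredient compared with the classical direct-category model structure is that the connecting functors $a_{*}$ are assumed merely left Quillen; but this is exactly what the filtration argument for Lemma A needs --- left Quillen functors preserve cofibrations, trivial cofibrations and colimits --- so no further hypothesis on the $\mccM(s)$ or the $a_{*}$ is required.
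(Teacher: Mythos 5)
Your proof is correct and follows the same overall architecture as the paper's: both run the transfinite induction of Hovey's Theorem 5.1.3 along the linear extension $d$, obtain factorizations by factoring the relative latching map $X(t)\coprod_{L_tX}L_tZ\to Y(t)$ in $\mccM(t)$ at each successor stage, solve lifting problems degreewise as ordinary lifting problems in $\mccM(t)$, and dispose of (ii) by passing to opposite categories. The one place where the two arguments genuinely diverge is the key lemma (your Lemma A, the analogue of Hovey's Lemma 5.1.5) asserting that $L_t$ carries (trivial) cofibrations of diagrams over $\ccI_{<d(t)}$ to (trivial) cofibrations in $\mccM(t)$. You prove it by filtering $L_tX$ by degree as a transfinite composition of pushouts of maps of the form $a_*(\text{relative latching map})$ and invoking that each $a_*$ is left Quillen. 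The paper instead observes that $L_s$ has a right adjoint $G_s$, given by $(G_sX)(a)=a^*X$, which sends (trivial) fibrations to objectwise (trivial) fibrations because each $a^*$ is right Quillen; by adjointness $L_s$ then preserves (trivial) cofibrations. The adjoint argument is shorter and sidesteps the bookkeeping of the relative filtration (which, as you note, must be run for maps of diagrams rather than single objects, using pushout-corner maps, and itself quietly reinvokes an adjunction or induction to see that the transfinite composition lands in cofibrations); your filtration is more explicit and makes visible exactly which cells build the latching object, at the cost of the organizational care you acknowledge. Both arguments use the full strength of the hypothesis that each $a_*\dashv a^*$ is a Quillen pair, just from opposite sides of the adjunction.
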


\begin{proof}
The verification of the axioms follows the same outline as in \cite[5.1.3]{hovey-model}.
The only difference is that here the ambient category changes at each object
in $\ccI$.     Instead of repeating these arguments, we give some of the details for these changing categories.  As in \cite[5.1.3]{hovey-model} we consider only the direct category case,
since the inverse category case is dual.

Define $\ccI_{<\beta}$ as the full subcategory of $\ccI$ on all objects $i$ such that
$d(s) < \beta$.   Then let $\mccM_{<\beta}$ denote the diagram of model categories induced by
the restriction of $\mccM$ to $\ccI_{<\beta}$.   Similarly, for any $\mccM$ diagram $X$, the restriction 
to $\ccI_{<\beta}$ gives an $\mccM_{<\beta}$ diagram $X_{<\beta}$.   
Given these definitions, the lifting axioms follow by induction as in 
 \cite[5.1.4]{hovey-model}, by producing
lifts for the various restrictions to $\mccM_{<\beta}$ diagrams.   Note that at
the successor ordinal case the relevant commutative diagram is just a usual lifting
problem in $\mccM(s)$.

To complete the verification of the model structure we follow the proof of \cite[5.1.3]{hovey-model}.
That proof uses \cite[5.1.5]{hovey-model} to consider maps formed by colimits. In the usual setting, the colimit is the left adjoint to the constant functor.  Here though instead of the constant functor one must use the relevant right adjoint. The colimit in question in our analogue of the proof of \cite[5.1.3]{hovey-model} is the functor $L_s$; denote its right adjoint by $G_{s}$. For an object $X$ in $\mccM(s)$,
the $\mccM$ diagram $G_{s}X$ at $a \in \ccI_{s}$ is $a^{*}X$.  Since each $a^{*}$ is a right 
Quillen functor, $G_{s}$ takes (trivial) fibrations to objectwise (trivial) fibrations. Thus the required analogue of \cite[5.1.5]{hovey-model} holds in our setting as well.  

The only other change needed in the proof of \cite[5.1.3]{hovey-model} is that for the induction 
step in the construction of the functorial factorizations one uses factorization 
in $\mccM(s)$ to factor the map $X(s)\coprod_{L_{s}X} L_{s}Y \to Y(s)$.
\end{proof}

\begin{remark}\label{rem.johnson}
In~\cite{johnson} Reedy diagrams are considered where the model structure
is allowed to vary although the underlying category does not vary.  One could further generalize Johnson's results to allow the underlying category to vary.  The necessary added conditions (see~\cite[3.1, 5.1]{johnson}) would require that the functor associated to any arrow in the diagram was a left Quillen functor. Since we do not have any such examples for motivation, we have not pursued this generalization.
\end{remark}

For the applications in~\cite{tnq3new}, we only need to consider diagram categories $\ccI$ with at most one map between any two objects.   Restricting to this situation simplifies the arguments for the following proposition.

\begin{prop}\label{prop-injective-model-properties}
Let $\ccI$ be a direct (or inverse) category with at most one map between any two
objects.  Assume given a diagram of proper, cellular model categories $\mccM$; that is, for each $s \in \ccI$, the model structure $\mccM(s)$ is proper
and cellular.
Then the diagram-projective (or diagram-injective) model structure on $\mccM$-diagrams defined in Theorem~\ref{thm.injective.model}
is a proper, cellular model category.  
\end{prop}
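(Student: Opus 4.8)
The plan is to verify directly the two requirements — left and right properness, and the conditions defining a cellular model category (cofibrant generation; the domains and codomains of the generating cofibrations compact, and the domains of the generating trivial cofibrations small; cofibrations effective monomorphisms) — by reducing each, via the objectwise formulas for limits and colimits in $\modcat{\mccM}$, to the corresponding fact in the individual $\mccM(s)$. The two basic inputs are: (a) in $\modcat{\mccM}$ colimits are formed objectwise using the left adjoint form of the structure maps (each $a_*$ preserves colimits), and limits are formed objectwise using the right adjoint form (each $a^*$ preserves limits); and (b) the standard Reedy facts, valid here because each $a_*$ is left Quillen and established by the same induction along $d$ as in the proof of Theorem~\ref{thm.injective.model}: a cofibration in $\modcat{\mccM}$ is objectwise a cofibration and a fibration in $\modcat{\mccM}$ is objectwise a fibration (in each case one of these holds by definition of the model structure and the other by the induction).

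\textbf{Properness and effective monomorphisms.} These are then formal. A pullback of an objectwise weak equivalence along a fibration is, at each $s$, a pullback of a weak equivalence along a fibration in $\mccM(s)$, hence a weak equivalence by right properness of $\mccM(s)$; dually, a pushout of an objectwise weak equivalence along a cofibration is objectwise a weak equivalence by left properness of the $\mccM(s)$. Likewise, cofibrations in $\modcat{\mccM}$ are objectwise cofibrations, hence objectwise effective monomorphisms since each $\mccM(s)$ is cellular; since the relevant pushout and equalizer are computed objectwise, such a map is an effective monomorphism in $\modcat{\mccM}$.

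\textbf{Cofibrant generation and compactness.} Each $\mccM(s)$ is cellular, hence cofibrantly generated, say by $I_s$ and $J_s$; the factorizations constructed in the proof of Theorem~\ref{thm.injective.model} are then assembled from the small object argument in the $\mccM(s)$ and exhibit $\modcat{\mccM}$ as cofibrantly generated. In the direct case the generating (trivial) cofibrations may be taken to be $\{F_s i \st s\in\ccI,\ i\in I_s\}$, resp.\ $\{F_s j \st s\in\ccI,\ j\in J_s\}$, where $F_s$ is left adjoint to evaluation $\mathrm{ev}_s\colon \modcat{\mccM}\to\mccM(s)$; since $\ccI$ has at most one map between objects, $F_s(W)(t)=a_*W$ when there is a unique map $a\colon s\to t$, and $F_s(W)(t)=\emptyset$ otherwise. (In the inverse case one uses instead the Reedy-type generators indexed by pairs $(s,i)$; their domains and codomains are likewise built, at each object, by applying the left Quillen functors $a_*$ to domains and codomains of the $I_t$ and forming colimits over the relevant slice poset, and the argument that follows is unchanged.) Now $\mathrm{ev}_s$ preserves all colimits, so it carries a relative cell complex for the generators of $\modcat{\mccM}$ to a relative cell complex in $\mccM(s)$ whose cells are the maps $a_* i$ with $i$ a generator of some $\mccM(t)$ and $a\in\ccI(t,s)$; these form a set $\mathcal K_s$ of cofibrations (each $a_*$ is left Quillen) containing $I_s$. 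By the adjunction $F_s\dashv\mathrm{ev}_s$ a map out of $F_sW$ into such a cell complex corresponds to a map out of $W$ into the corresponding $\mathcal K_s$-cell complex, compatibly with passage to subcomplexes, so $F_sW$ is compact, resp.\ small, relative to the generators of $\modcat{\mccM}$ provided $W$ is compact, resp.\ small, relative to $\mathcal K_s$. The domains and codomains of $I_s$ are compact relative to $I_s$, and the domains of $J_s$ small relative to $I_s$, by cellularity of $\mccM(s)$; the robustness of these conditions under enlarging the generating set to an arbitrary set of cofibrations then yields the same relative to $\mathcal K_s$, completing the verification.

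\textbf{Main obstacle.} The real work is in this last step: carefully transporting relative cell complexes and their subcomplexes across $\mathrm{ev}_s$ and $F_s$, and invoking that in a cellular model category the domains and codomains of the generating cofibrations are compact not merely relative to those generators but relative to any set of cofibrations (and similarly for smallness). This is exactly where the cellularity of each $\mccM(s)$, rather than mere cofibrant generation, is used, and where the hypothesis that $\ccI$ has at most one morphism between objects pays off, by keeping the slice categories — hence the latching and matching constructions and the functors $F_s$ — transparent. Everything else is a routine objectwise reduction to the given properties of the $\mccM(s)$.
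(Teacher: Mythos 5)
Your proposal is correct and follows essentially the same route as the paper: objectwise reduction for properness, the identical generating sets (the free diagrams $\cF^s_A\to\cF^s_B$ in the direct case and the corner maps $\cF^s_A\coprod_{\partial\cF^s_A}\partial\cF^s_B\to\cF^s_B$ in the inverse case), and reduction of the cellularity conditions to the individual $\mccM(s)$. The only real difference is one of delegation: where you verify cofibrant generation and compactness/smallness by hand via the adjunction $F_s\dashv \mathrm{ev}_s$ (flagging the subcomplex-transport step as the crux), the paper observes that a direct (resp.\ inverse) category is a Reedy category with trivial matching (resp.\ latching) data and cites Hirschhorn's \cite[15.6.27]{hh} and \cite[15.7.6]{hh} for exactly those points.
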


\begin{proof}
We first establish properness.  In the diagram-projective case fibrations
and weak equivalences are defined objectwise and one can show that any cofibration induces
an objectwise cofibration.   Since pullbacks and pushouts are constructed at each object
and $\mccM(s)$ is assumed to be a proper model structure for each $s$, 
properness follows.  The diagram-injective case is dual.

We use Hirschhorn's treatment of Reedy categories~\cite[Chapter 15]{hh}
to establish that these model structures are cellular.  Note that a direct category is an example of a Reedy category with no morphisms that lower 
degrees.   In this case, the matching categories are empty so that the matching objects are just the terminal object.  Thus, the Reedy fibrations are just the objectwise fibrations and the 
Reedy model structure~\cite[15.3.4]{hh} agrees with the diagram-projective model structure defined above.   The arguments for an inverse category are dual.

Next we define the generating (trivial) cofibrations.   
Given an object $A$ in $\mccM(s)$, define the free $\mccM$ diagram generated by 
$A$ at $s$ to be
$\cF^{s}_{A}(t) = a_{*}A$ when $\ccI(s,t)=\{a\}$ is non-empty and the
initial object otherwise.   
For $\ccI$ a direct category, and $f:A\to B$ in $\mccM(s)$, define
$RF^{s}_{f}$ to be the induced map of diagrams $\cF^s_A \to \cF^s_B$.
 Let $I_{s}$ denote the generating cofibrations for $\mccM(s)$.  Let $RF^{\ccI}_{I}$ denote the set 
 of maps $RF^{s}_{f}$ for all maps $f$ in $I_{s}$ for all $s$ in $\ccI$.   Define
$RF^{\ccI}_{J}$ similarly based on the sets $J_{s}$ of generating trivial cofibrations
for $\mccM(s)$.   
 By~\cite[15.6.27]{hh},  the diagram-projective model structure on 
 $\mccM$ diagrams is cofibrantly generated with generating cofibrations $RF^{\ccI}_{I}$ and generating trivial cofibrations $RF^{\ccI}_{J}$.  

For $\ccI$ an inverse category, we define the boundary of the free functor $\cF^s_X$ to be
$\partial \cF^{s}_{X}(t) = a_{*}X$ when $\ccI(s,t) = \{a\}$ is non-empty and $s \ne t$ and the initial object otherwise.  Note that these functors only differ at $s=t$. Given a map $f: A \to B$ in
$\mccM(s)$, let $RF^{s}_{f}$ denote the $\mccM$ diagram map 
\[ \cF^{s}_{A} \coprod_{\partial \cF^{s}_{A}}\partial \cF^{s}_{B} \to \cF^{s}_{B}.
\]
(In~\cite[15.6.18]{hh}, the boundary $\partial \cF^{s}$ of a free functor is defined for general Reedy 
categories and uses the non-identity maps in $\ccI$ which lower degree.   For $\ccI$ a direct category, this simplifies since no map lowers degree. Thus, $\partial \cF^{s}$ is just the 
diagram of initial objects and one recovers the above definition of $RF^s_f$.)   
As in the case of a direct category, by~\cite[15.6.27]{hh}, the diagram-injective model structure on $\mccM$ diagrams is cofibrantly generated with generating cofibrations $RF^{\ccI}_I$ and generating trivial cofibrations $RF^{\ccI}_J$ defined as above using the generating cofibrations $I_s$ and trivial cofibrations  $J_s$ from $\mccM(s)$.  
 
Finally, \cite[15.7.6]{hh} establishes the additional conditions for showing this is a cellular model category given that each category $\mccM(s)$ is a cellular model category.
\end{proof}

\begin{remark}\label{rem:many.objects}
For $\ccI$  an inverse category with at most one morphism in each $\ccI(s,t)$, modules over a diagram of rings over $\C$
are equivalent to
categories of modules over a ring with many objects over $\C$.
If $\ccI(s,t)$ is non-empty then the map $\mccR(s) \to \mccR(t)$ makes
$\mccR(t)$ an $\mccR(s)$-module.   
There is an associated $\C$-enriched category indexed on the objects of $\ccI$ which we also
denote by $\mccR$ with $\mccR(s,s)$ the
ring $\mccR(s)$, $\mccR(s,t)$ trivial when $\ccI(s,t)$ is empty, and 
$\mccR(s,t)$ the $\mccR(t)$ - $\mccR(s)$ 
bimodule $\mccR(t)$ when $\ccI(s,t)$ is non-empty.

A (left) module $M$ over $\mccR$ is a covariant $\C$-enriched functor from $\mccR$ to
$\C$.   The data needed to specify such a module is exactly the same as given
for a module over the associated diagram $\mccR$.   First, for each object $s$ in $\ccI$,
$M(s)$ is an $R(s,s) (= R(s))$ module and for each morphism $a: s \to t$ in $\ccI$
the module structure specifies a map $R(s,t) \otimes_{R(s,s)} M(s) \to M(t)$.   Since $R(s,t) = R(t)$,
this is the required map $F_{a}M(s) \to M(t)$ where $F_{a}$ is extension of scalars over
$R(s) \to R(t)$.   We consider covariant functors here because this eases the comparison with diagrams even though this differs from the right modules (or contravariant functors) considered 
in~\cite[3.3.2]{ss2}.   
\end{remark}

\section{Inverse limit example}\label{sec-inverse}

In this section we develop a result comparing modules over a diagram of rings and modules over the homotopy inverse limit of the diagram of rings.  We show that the adjunction associated to the change from the diagram of rings to the one homotopy inverse limit ring induces a Quillen equivalence after applying the Cellularization Principle from~\cite{GScell}; see also Proposition~\ref{prop-cell-qe}.  This is a model for the more general adjunctions considered in Section~\ref{sec:adjoints}.   The particular case of a pull back diagram of rings, such as in the classical Hasse principle, is treated in more detail in Section 6 of~\cite{GScell}.   Here we will work in the context of ring and module spectra, but this material also easily translates to the differential graded context.  Note though that it is necessary to be in a stable context to use Proposition~\ref{prop-cell-qe} and here $A$-cellularization denotes cellularization with respect to all suspensions and desuspensions of $A$.
   
Assume given a finite, inverse category $\ccI$ with at most one morphism in each $\ccI(s,t)$ and a diagram of ring spectra, $\mccR$, indexed on 
$\ccI$.   We consider the associated diagram of model categories $\mccM$ with $\mccM(s)$ the model category of $R(s)$-module spectra and $F_{a} = R(t) \sm_{R(s)} (-)$
the left Quillen functor given by extension of scalars.  We refer to $\mccM$-diagrams
as $\mccR$-modules and compare the diagram-injective model category of $\mccR$-modules with modules over the homotopy inverse limit 
of the diagram $\mccR$.   

By~\cite[19.9.1]{hh},
the homotopy inverse limit of $\mccR$ is the inverse limit of a fibrant replacement of $\mccR$ in the diagram-injective model category of 
$\ccI$-diagrams of ring spectra.   This model structure exists
by~\cite[5.1.3]{hovey-model}; see also~\cite[15.3.4]{hh} since an
inverse category is a particular example of a Reedy category.  Let $g:
\mccR \to \fmccR$ be this fibrant 
replacement and let $\hat{\mccR}$
denote the inverse limit over $\ccI$ of $f\mccR$.  
We compare $\mccR$-modules and $\bbarmccR$-modules via the
category of $\fmccR$-modules.  Since $\mccR \to \fmccR$ is an objectwise weak equivalence, there is a Quillen equivalence between $\mccR$-modules and $\fmccR$-modules by 
Lemma~\ref{lem.ext.scalars} below.
We also establish below  a Quillen adjunction between ${\bbarmccR}$-modules and $\fmccR$-modules
which is a Quillen equivalence after cellularization.  To satisfy the smallness hypotheses needed in the Cellularization Principle~\ref{prop-cell-qe}, we must assume that $\ccI$ is a finite category. This leads to the following statement.

\begin{prop}\label{prop-gen-pb}
For $\ccI$ a finite, inverse category with at most one morphism in each $\ccI(s,t)$ and $R$ a $\ccI$-diagram of ring spectra with homotopy inverse limit $\bbarmccR$, there is a zig-zag of Quillen equivalences between the category of $\bbarmccR$-modules and the cellularization with respect to $\mccR$ of $\mccR$-modules.
\[ \modcat{\bbarmccR} \simeq_{Q} \mbox{$\mccR$-cell-}\modcat{\mccR} \]
\end{prop}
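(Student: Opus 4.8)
The plan is to assemble the zig-zag out of three Quillen equivalences, as the text just before the statement outlines. First I would recall the setup: $\mccR \to f\mccR$ is a diagram-injective fibrant replacement in $\ccI$-diagrams of ring spectra, $\bbarmccR = \ilim_\ccI f\mccR$, and $\mccM$ is the diagram of model categories with $\mccM(s) = $ $R(s)$-module spectra and extension of scalars as the structure functors. The three stages are: (a) a Quillen equivalence $\modcat{\mccR} \simeq_Q \modcat{f\mccR}$ induced by the objectwise weak equivalence $g: \mccR \to f\mccR$, obtained by applying the cited Lemma~\ref{lem.ext.scalars} objectwise; (b) the Quillen adjunction between $\modcat{\bbarmccR}$ and $\modcat{f\mccR}$ given by extension/restriction of scalars along the canonical maps $\bbarmccR \to f\mccR(s)$ — here $i^*$ in the notation of the ``Right Adjoint Case'' is evaluation/restriction and its partner sends a $\bbarmccR$-module $N$ to the $f\mccR$-diagram $s \mapsto f\mccR(s) \sm_{\bbarmccR} N$; (c) the identity Quillen equivalence between the cellularized categories, i.e. we cellularize $\modcat{f\mccR}$ with respect to the image of $\mccR$ under (a) and transport the cellularization of $\modcat{\mccR}$ with respect to $\mccR$ across the Quillen equivalence of (a) (a Quillen equivalence carries a cellularization at $A$ to the cellularization at the derived image of $A$).

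Next I would turn to the one genuinely substantive step, (b) plus its upgrade to a Quillen equivalence after cellularization. The adjunction $(\,f\mccR \sm_{\bbarmccR} (-),\ \text{forget to }\bbarmccR\text{-modules via the diagram of ring maps}\,)$ is a Quillen pair for the diagram-injective structure because restriction along each $\bbarmccR \to f\mccR(s)$ preserves fibrations and trivial fibrations, and fibrations in the diagram-injective structure are detected by the matching maps $X(s) \to Y(s)\times_{M_s Y} M_s X$, which are built from these objectwise restrictions and limits (this uses Theorem~\ref{thm.injective.model}(ii) and Proposition~\ref{prop-injective-model-properties} to know the diagram-injective structure exists and is proper and cellular, so that cellularization in the sense of~\cite{GScell} is available). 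To apply the Cellularization Principle (Proposition~\ref{prop-cell-qe}) one must: (i) work in a stable context — module spectra over ring spectra are stable, and $\bbarmccR$-modules likewise; (ii) pick a set of cells — here the cells on the $\modcat{f\mccR}$ side are the images of (the diagram) $\mccR$ under $f\mccR\sm_{\bbarmccR}(-)$ applied to $\bbarmccR$ itself, equivalently the objectwise ``represented'' $f\mccR$-modules, and on the $\bbarmccR$-side the single cell $\bbarmccR$; (iii) check that the right adjoint (restriction to $\bbarmccR$-modules) followed by the left adjoint sends the chosen cell $\bbarmccR$ to an object weakly equivalent to $\bbarmccR$, and that the derived unit/counit is an equivalence on cells. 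The key computational input is that $\bbarmccR = \ilim_\ccI f\mccR$ is in fact the \emph{homotopy} inverse limit, so $\RHom_{\bbarmccR}(\bbarmccR, -)$ evaluated against the diagram $f\mccR$ recovers $f\mccR$ up to the homotopy limit comparison, and finiteness of $\ccI$ guarantees the smallness needed for the Cellularization Principle.

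The main obstacle, and the place I would spend the most care, is verifying the hypotheses of the Cellularization Principle for step (b): specifically, that the derived counit $f\mccR \sm_{\bbarmccR} (\text{res } f\mccR) \to f\mccR$ (on the generating cells, i.e. on the free $f\mccR$-modules) is a weak equivalence after cellularization, and dually that the derived unit on $\bbarmccR$ is an equivalence — this is exactly the assertion that $\bbarmccR$ built as the honest inverse limit of the fibrant diagram $f\mccR$ has the correct homotopy type and that base change along $\bbarmccR \to f\mccR(s)$ behaves well, which is where the finiteness of $\ccI$ and the fibrancy of $f\mccR$ are essential. Once (b) is in hand, composing with (a) and (c) yields the displayed zig-zag
\[ \modcat{\bbarmccR} \simeq_Q \mbox{$\mccR$-cell-}\modcat{\mccR}, \]
and I would close by remarking that this is the promised model-categorical incarnation of the local-to-global principle, generalizing the Hasse-square case treated in Section~6 of~\cite{GScell}.
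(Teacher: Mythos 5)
Your outline follows the paper's proof essentially step for step: the zig-zag is (a) the Quillen equivalence $\modcat{\mccR}\simeq_Q\modcat{\fmccR}$ from Lemma~\ref{lem.ext.scalars}, transported to cellularizations by Corollary~\ref{cor-cell-qe}, and (b) the adjunction $(\fmccR\sm_{\bbarmccR}(-),\ \ilim)$ between $\bbarmccR$-modules and the diagram-injective structure on $\fmccR$-modules, upgraded to a Quillen equivalence of cellularizations by the Cellularization Principle, with the closing observation that cellularizing $\modcat{\bbarmccR}$ at its generator $\bbarmccR$ changes nothing. Two points in your write-up need attention, one of them a genuine gap.

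The gap: you dispose of the smallness hypotheses with ``finiteness of $\ccI$ guarantees the smallness needed.'' This is the single most substantial verification in the proof and it is not automatic, since $\mccR$ is not a free (corepresented) $\mccR$-module. The paper argues it by exhibiting $\mccR$ as the finite colimit over $\ccI^{op}$ of the free modules $L^sR(s)$ (each small because the right adjoint, evaluation at $s$, commutes with coproducts), checking that this colimit is a homotopy colimit (the diagram is Reedy cofibrant because every structure map is an identity or the inclusion of the zero object), and then showing that a finite homotopy colimit of small objects is small because the adjoint finite limit commutes with infinite coproducts of abelian groups. Without some such argument, Proposition~\ref{prop-cell-qe} and Corollary~\ref{cor-cell-qe} cannot be invoked. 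The smaller imprecision: the right adjoint in your step (b) is not ``restriction to $\bbarmccR$-modules'' --- objectwise restriction of scalars produces an $\ccI$-diagram of $\bbarmccR$-modules, not a single one; the right adjoint is that restriction followed by $\ilim_{\ccI}$. This matters because the counit condition of Proposition~\ref{prop-cell-qe}(2) at the cell $\fmccR$ is precisely the statement that $\fmccR\sm_{\bbarmccR}(\ilim_{\ccI}\fmccR)\to\fmccR$ is an equivalence, which holds on the nose because $\fmccR$ is diagram-injective fibrant (so $\ilim_{\ccI}\fmccR=\bbarmccR$ already computes the homotopy limit); it is a check on the cells themselves, not something that becomes true ``after cellularization.''
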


We first need the following lemma.   

\begin{lemma} \label{lem.ext.scalars}
Assume given $L: \mccM \to \cN$ a map of diagrams of model categories
over a direct category $\ccI$.   
If each $L(s): \mccM(s) \to \cN(s)$ is a left Quillen equivalence, then $L$ 
induces a Quillen equivalence between the  diagram-projective
model structures of $\mccM$ and $\cN$
diagrams.
If $\ccI$ is instead an inverse category, then $L$ induces a Quillen equivalence between the diagram-injective model structures. 
\end{lemma}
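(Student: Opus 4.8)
The plan is to reduce everything to the objectwise situation, using the explicit description of the diagram-projective (resp.\ diagram-injective) model structure from Theorem~\ref{thm.injective.model}. Write $L \dashv N$ for the levelwise adjoint pairs, so $L(s) \dashv N(s)$, and let the diagram functor $L_\bullet \dashv N_\bullet$ on $\mccM$- and $\cN$-diagrams be given objectwise; the unit and counit are assembled objectwise, so $L_\bullet$ indeed has right adjoint $N_\bullet$. I will treat the direct case (diagram-projective structure); the inverse case is formally dual, swapping cofibrations for fibrations, latching for matching objects, and colimits for limits.

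\emph{Step 1: $L_\bullet \dashv N_\bullet$ is a Quillen pair.} By Theorem~\ref{thm.injective.model}(i), weak equivalences and fibrations in both diagram-projective structures are objectwise, and each $L(s)$ is left Quillen, so $N_\bullet$ preserves fibrations and trivial fibrations objectwise, hence as a functor of diagrams. Thus $(L_\bullet, N_\bullet)$ is a Quillen pair.

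\emph{Step 2: $L_\bullet$ preserves and reflects weak equivalences between cofibrant objects, and $N_\bullet$ reflects weak equivalences between fibrant objects.} Since weak equivalences are objectwise in both structures, it suffices to know each $L(s)$ does this on cofibrant objects; that is part of $L(s)$ being a Quillen equivalence once we know that a cofibrant object of the diagram-projective structure is objectwise cofibrant. This last point follows from the characterization in Theorem~\ref{thm.injective.model}(i): if $X \to Y$ is a diagram-projective cofibration then $X(s)\amalg_{L_sX}L_sY \to Y(s)$ is a cofibration in $\mccM(s)$ for all $s$, and by induction along the linear extension $d\colon \ccI \to \lambda$ (using that $L_s$ of a cofibrant-so-far diagram is cofibrant, as $L_s$ is left Quillen from the restricted diagram category), one deduces that a cofibrant diagram is objectwise cofibrant. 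Then each $N(s)$ reflects weak equivalences between fibrant objects (all objects being fibrant is not assumed, but fibrant diagrams are objectwise fibrant by definition), so $N_\bullet$ reflects weak equivalences between fibrant diagrams.

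\emph{Step 3: derived unit and counit are weak equivalences.} Using the standard criterion (e.g.\ \cite[1.3.16]{hovey-model}), it is enough to check that for every cofibrant diagram $X$ and fibrant diagram $Z$, a map $L_\bullet X \to Z$ is a weak equivalence if and only if its adjoint $X \to N_\bullet Z$ is. Both conditions are objectwise, $X$ is objectwise cofibrant by Step~2, $Z$ is objectwise fibrant, the adjunction is objectwise, and each $L(s)\dashv N(s)$ is a Quillen equivalence, so this holds at each $s$, hence for the diagram. Therefore $L_\bullet$ is a Quillen equivalence.

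The main obstacle is Step~2, specifically verifying that diagram-projective cofibrant objects are objectwise cofibrant; this is where the changing ambient categories make the bookkeeping slightly delicate, since one must check that $L_s$ applied to the restriction of a cofibrant diagram to $\ccI_{<\beta}$ is again cofibrant, so that the pushout-product characterization can be fed into the induction. Once that is in hand, everything else is a levelwise transfer of the Quillen-equivalence axioms.
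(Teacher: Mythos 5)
Your proposal is correct and follows essentially the same route as the paper's proof: establish the objectwise Quillen pair, note that cofibrant (resp.\ fibrant) diagrams are objectwise cofibrant (resp.\ fibrant), and then check the Quillen-equivalence criterion objectwise. The only difference is that you spell out the inductive argument along the linear extension showing diagram-projective cofibrant objects are objectwise cofibrant, a point the paper simply asserts.
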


\begin{proof}
Let $R$ denote the right adjoint of $L$.  Since each $R(s)$ is a right Quillen functor, $R$ preserves  the objectwise fibrations and weak equivalences of the diagram-projective model structures.
The equivalence then follows since a cofibrant or fibrant $\mccM$ diagram is objectwise 
cofibrant or fibrant in the diagram-projective model structure.
Namely, given a cofibrant $\mccM$ diagram $X$ and a fibrant $\cN$ diagram $Y$,
a map $LX \to Y$ is an objectwise weak equivalence if and only if 
$X \to RY$ is an objectwise weak equivalence since $L$ and its right adjoint $R$
are objectwise Quillen equivalences.   The diagram-injective case is similar.
\end{proof}

\begin{proof}[of Proposition~\ref{prop-gen-pb}] 
Since $g: \mccR \to \fmccR$ is an objectwise weak equivalence, and extension of
scalars along weak equivalences
of ring spectra induce Quillen equivalences,
 the associated diagram module
categories are Quillen equivalent by Lemma~\ref{lem.ext.scalars}.   Once we verify that $\mccR$ is small (and hence also $\fmccR$ is small), Corollary~\ref{cor-cell-qe} shows that this induces a Quillen equivalence on the cellularizations of the diagram-injective model
structures
\[ \mbox{$\mccR$-cell-}\modcat{\mccR}  \simeq_{Q}   \mbox{$\fmccR$-cell-}\modcat{\fmccR}\]
since $\mccR$ is cofibrant in $\mccR$-modules and extension of scalars takes
$\mccR$ to $\fmccR$.

To show that the object $\mccR$ is small in $\mccR$-modules, we first show it is the finite colimit of small objects in $\mccR$-modules. Let $L^s$ denote the left adjoint to evaluation at the object $s \in \ccI$.  Note that the objects $L^sR(s)$ are small because $R(s)$ is small in $R(s)$-modules and the right adjoint, evaluation, commutes with infinite coproducts.  
One can show that $L^s R(s)$ is the $\mccR$-module with value $R(t)$ at $t \in \ccI$ if $\ccI(s,t)$ is non-empty and $0$ otherwise. Thus, if $\ccI(s,t)$ is non-empty there is a map $L^t R(t) \to L^sR(s)$ and $\mccR$ is the (finite) colimit over $\ccI^{{op}}$ of $L^sR(s)$.  

We use~\cite[19.9.1]{hh} to show that this colimit is a homotopy colimit.  This follows since $\ccI^{op}$ is a direct category and the diagram $L^sR(s)$ is Reedy cofibrant because all of the maps between objects $L^sR(s)$ and $L^tR(t)$ are either the inclusion of 0 or the identity map.  

To show that the finite homotopy colimit of small objects is small, we show that the adjoint finite limit commutes with infinite direct sums of abelian groups.  A finite product commutes with an infinite coproduct because finite products agree with finite coproducts here.  One can also check directly that equalizers commute with infinite coproducts for abelian groups.  Since finite limits are constructed from finite products and equalizers, this shows that finite limits commute with infinite coproducts.  
This argument is worked out in more detail for $\ccI$ a pullback diagram in Section 6.4 of~\cite{GScell}.

Next we compare $\bbarmccR$-modules and the diagram-injective structure on $\fmccR$-modules.   Since
$\bbarmccR$ is the inverse limit of $\fmccR$,  any
$\fmccR$-module $M$ defines an underlying $\ccI$ diagram of $\bbarmccR$-modules 
$\widetilde{M}$.  Denote the inverse limit of $\widetilde{M}$ by $\hat{M}$.   The functor $\fmccR \otimes_{\bbarmccR} -$
is left adjoint to this inverse limit functor and takes an $\bbarmccR$ module $N$ to
the $\fmccR$-module with $\fmccR(s) \otimes_{\bbarmccR} N$ at $s \in \ccI$.   
Since extension of scalars for a map
of ring spectra is a left Quillen functor and cofibrations and weak equivalences
are defined objectwise, $\fmccR \otimes_{\bbarmccR} -$ is a left Quillen functor from $\bbarmccR$-modules to $\fmccR$-modules.
We next apply the Cellularization Principle, Proposition~\ref{prop-cell-qe} (2),  
to this Quillen adjunction to induce a 
Quillen equivalence on the appropriate cellularizations.   Note that $\bbarmccR$
is cofibrant as an $\bbarmccR$-module and applying extension of scalars to it
gives $\fmccR$.   Since $\fmccR$ is diagram-injective fibrant as a diagram of ring spectra,
it is also  diagram-injective fibrant as an $\fmccR$-module.  Since
$\bbarmccR$ is the inverse limit of $\fmccR$,  cellularization induces a Quillen
equivalence.
\[ \mbox{$\bbarmccR$-cell-}\modcat{\bbarmccR} \simeq_{Q} \mbox{$\fmccR$-cell-}\modcat{\fmccR}  \]
Since $\bbarmccR$ is already a cofibrant generator of $\bbarmccR$-modules, the
cellular weak equivalences and fibrations in $\bbarmccR$-cell-${\bbarmccR}$-modules agree 
with those before cellularization.  Thus the cellularization of the model structure on the left 
is unnecessary and the statement follows.
\end{proof}

\begin{remark}
We want to point out that the model category
$\mbox{$\mccR$-cell-}\modcat{\mccR}$ is similar to the homotopy limit
homotopy theory considered in \cite{toen}, ~\cite{bergner1}
and~\cite{bergner2}.   However, in the present case we have shown this
is a model for the simpler category of $\bbarmccR$-modules.  

Also see Section~\ref{sec-inv-adjoint} for a reconsideration of the results here using the general results of Section~\ref{sec:adjoints}.
\end{remark}

\section{Adjunctions}\label{sec:adjoints}  
In this section we develop Quillen equivalences between categories of modules over diagrams of different shapes.  We consider the two basic cases corresponding to left and right adjoints as described in Section \ref{subsec:elemequivalences}.  We end by showing how these two base cases were combined in Proposition~\ref{prop-gen-pb}.
 
\subsection{The Left Adjoint Case}
Suppose $\bfE$ is a direct category with a fixed linear extension $d: \bfE \to \lambda$ for some ordinal $\lambda$. 
Let $i: \bfD \lra \bfE$ be an inclusion of a full subcategory and $\mccM: \bfE \lra \C$ be a diagram of
model categories and Quillen adjunctions as in Section~\ref{subsec:modcatdiag}
and~\ref{subsec:modstructure}.  Restriction of $\mccM$  produces a diagram $\mccM|_{\bfD}: \bfD \lra \C$,
and a restriction functor from $\mccM$-diagrams to $\MD$-diagrams
$$i^*: \modcat{\mccM}\lra \modcat{\mccM|_{\bfD}}. $$  
Assuming that each model category $\mccM(s)$ has all colimits, a left adjoint, $i_{*}$, of 
this restriction exists.  
Given an $\MD$-diagram $X$, we identify $i_{*}X(s)$ as in the end of Section~\ref{subsec:elemequivalences}.
Let $\bfD/{t}$ be the category of  morphisms $a:s \to t$ in $\bfE$ with domain $s$ in $\bfD$.
Then 
$$i_{*}X (t) = \colim_{s \in \bfD/{t}} a_{*}X(s).$$  
Since $\bfD$ is a full subcategory, for any $a': t \to t'$ in $\bfE$ the structure maps $a'_{*}i_* X(t) \to i_{*}X(t')$ can be filled in  by the universal property of colimits and the compatibility of compositions of arrows in $\bfE$.   

\begin{example}
Consider a category $\bfE$ with three objects and three non-identity morphisms 
$$\diagram
1\rto^{a} \drto_{c} & 2 \dto^{b}\\
& 3\\
\enddiagram$$ with $b\circ a = c$ and let $\bfD$ be the full subcategory $1 \xrightarrow{c} 3$.  Assume given a diagram of rings $R$ over $\bfE$ and an $R|_{\bfD}$-module $M$ with structure map $$c_{*}M(1) = R(3) \otimes_{R(1)} M(1) \to M(3).$$ Then $i_{*}M(2) = a_{*}M(1) =R(2) \otimes_{R(1)} M(1)$ and
the structure map $$b_{*}i_{*}M(2)= b_{*}a_{*}M(1) \to M(3)$$ agrees with the structure map $c_{*}M(1) \to M(3)$.
\end{example}
 
In our applications for~\cite{tnq3new}, the inclusion of $\bfD$ in
$\bfE$ is similar to this example and thus has a simplified left
adjoint.  This situation is described in the following statement.

\begin{prop}  Let $i: \bfD \lra \bfE$ be the inclusion of a full subcategory $\bfD$ in a direct category $\bfE$ such that $\bfD/{s}$ has a terminal object $a: t_{s} \to s$ for each $s \in \bfE$.  Given an $\MD$-diagram $X$, then the left adjoint to restriction evaluated at $s$ is 
$i_{*}X(s) =  a_{*}X(t_{s})$.  
\end{prop}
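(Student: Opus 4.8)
The plan is to show that under the hypothesis that $\bfD/s$ has a terminal object $a \colon t_s \to s$ for each $s \in \bfE$, the colimit formula $i_*X(s) = \colim_{s' \in \bfD/s} a'_*X(s')$ from the preceding discussion collapses to a single value. The key observation is purely categorical: if a diagram-shaped category $\cC$ has a terminal object $c$, then for any functor $F \colon \cC \to \mathcal{N}$ into a cocomplete category, the canonical map $F(c) \to \colim_\cC F$ is an isomorphism (the terminal object is a final subcategory in the sense of cofinality, or one simply checks the universal property directly: a cocone under $F$ is determined by its component at $c$).

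First I would set up the functor whose colimit computes $i_*X(s)$. Recall from the text immediately preceding the statement that $\bfD/s$ is the category of morphisms $a' \colon s' \to s$ in $\bfE$ with $s'$ in $\bfD$, and the relevant functor $\bfD/s \to \mccM(s)$ sends $(a' \colon s' \to s)$ to $a'_*X(s')$ (using that $\mccM$ is a functor, so $a'_*$ is defined and composites of the $a'_*$'s are compatible). By hypothesis this indexing category has a terminal object, namely $a \colon t_s \to s$. Second, I would invoke the categorical lemma above: since $\mccM(s)$ has all colimits (this is part of the standing assumption for the left adjoint case), the natural map from the value of the functor at the terminal object to the colimit is an isomorphism, i.e.
\[ i_*X(s) = \colim_{(a'\colon s' \to s) \in \bfD/s} a'_*X(s') \;\cong\; a_*X(t_s). \]
Third, I would note that one should check this identification is natural in $X$ and compatible with the structure maps, i.e. that $a_*X(t_s)$, as $s$ varies, assembles into an $\mccM$-diagram isomorphic to $i_*X$; but this follows formally since the isomorphism at each $s$ is induced by the universal cocone, and both sides are functors of $X$ landing in $\modcat{\mccM}$ with the structure maps on the left induced by the universal property of the colimit (as explained in the text before the statement).

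The main obstacle — really the only point requiring a little care rather than a formal citation — is verifying that the terminal object of $\bfD/s$ genuinely dominates the colimit in the correct variance, and in particular that for any object $(a'\colon s' \to s)$ of $\bfD/s$ the unique map to $(a \colon t_s \to s)$ induces the expected map $a'_*X(s') \to a_*X(t_s)$ in $\mccM(s)$; this is where one uses that $\bfD$ is a \emph{full} subcategory (so the connecting morphism $s' \to t_s$ in $\bfE$ lies in $\bfD$) together with the compatibility of the $(-)_*$ functors with composition, exactly as illustrated in the preceding example. Once that bookkeeping is in place, the isomorphism $i_*X(s) \cong a_*X(t_s)$ is immediate, and since the left adjoint $i_*$ is determined up to unique isomorphism, this computes it.
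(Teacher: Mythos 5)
Your proof is correct and is exactly the argument the paper intends: the paper states this proposition without any proof, treating it as an immediate consequence of the colimit formula $i_{*}X(s)=\colim_{s'\in\bfD/s}a'_{*}X(s')$ given just above it, together with the standard fact that a colimit over an indexing category with a terminal object is computed by evaluation at that object. Your additional remarks on fullness of $\bfD$ (needed so the connecting morphisms $s'\to t_s$ carry structure maps of the $\MD$-diagram $X$) and on naturality in $X$ are a reasonable elaboration of bookkeeping the paper leaves implicit.
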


Since the restriction functor $i^*: \modcat{\mccM}\lra \modcat{\mccM|_{\bfD}}$  preserves objectwise weak equivalences and fibrations, it is a right Quillen functor on the diagram-projective model structures.  It then induces a Quillen equivalence on the cellularizations under the following conditions by the Cellularization Principle~\ref{prop-cell-qe}.

\begin{thm}  \label{thm5.3}
Let $\bfE$ be a direct category with $i: \bfD \lra \bfE$ an inclusion of a full subcategory and let $\mccM: \bfE \lra \C$ be a diagram of right proper, cellular, stable model categories 
such that each $\mccM(s)$ has all colimits.
There is a Quillen adjunction on the diagram-projective model structures.
$$
\adjunction{i_{*}}{\modcat{\MD}}{\modcat{\mccM}}{i^{*}}
$$  
\begin{enumerate}
\item Assume given a stable set of small cells $\cX$ in $\modcat{\MD}$ such that  $\underbar i_{*} X$ is small and the derived counit $X \to \underbar i^{*}\underbar i_{*} X $ is an equivalence for each $X \in \cX$.
Then $ (i_{*}, i^{*})$ induces a Quillen equivalence on the associated cellularizations.
$$
{\cX}\dcell\modcat{\MD}\simeq_{Q} {{\underbar i_{*}\cX}\dcell\modcat{\mccM}}
$$  
\item Assume given a stable set of small cells $\cY$ in $\modcat{\mccM}$ such that $\underbar i^{*}Y$  is small and  the derived counit $\underbar i_{*}\underbar i^{*} Y \to Y$ is an equivalence for each $Y \in \cY$.
Then $ (i_{*}, i^{*})$ induces a Quillen equivalence on the associated cellularizations.
$$
{\underbar i^{*}\cY}\dcell\modcat{\MD}\simeq_{Q}  {{\cY}\dcell\modcat{\mccM}}
$$  
\end{enumerate}
\end{thm}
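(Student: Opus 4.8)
The plan is to reduce everything to the Cellularization Principle, Proposition~\ref{prop-cell-qe}, by verifying its hypotheses for the Quillen adjunction $(i_*, i^*)$ on the diagram-projective model structures. First I would establish that $(i_*, i^*)$ is indeed a Quillen pair: by Theorem~\ref{thm.injective.model}(i), the diagram-projective model structures exist on both $\modcat{\MD}$ and $\modcat{\mccM}$ (using that $\bfE$, hence $\bfD$, is direct), and by Proposition~\ref{prop-injective-model-properties} they are proper and cellular (here one needs the at-most-one-morphism hypothesis, or else to invoke the full Reedy machinery of~\cite{hh}; since the statement only asks for right properness and cellularity of each $\mccM(s)$, I would cite \cite[15.6.27, 15.7.6]{hh} as in the proof of Proposition~\ref{prop-injective-model-properties}). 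The restriction functor $i^*$ preserves objectwise weak equivalences and fibrations essentially by definition, since $\bfD$ is a full subcategory and the diagram-projective structures have weak equivalences and fibrations defined objectwise; hence $i^*$ is right Quillen and $(i_*, i^*)$ is a Quillen adjunction. Stability of the diagram-projective model structures follows from stability of each $\mccM(s)$, since (co)fibre sequences and the suspension/loop functors are computed objectwise.

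Next I would set up the cellularizations. For part (1): take the stable set of small cells $\cX$ in $\modcat{\MD}$, form the derived functor $\underbar i_*$, and apply Proposition~\ref{prop-cell-qe}(1) (the version where one cellularizes the source at $\cX$ and the target at the derived image $\underbar i_* \cX$). The hypotheses of that Principle are: $(i_*, i^*)$ is a Quillen adjunction (done above); the objects of $\cX$ are small and cofibrant in $\modcat{\MD}$ (small is assumed; one may replace each cell by a cofibrant model without changing the cellularization); the objects of $\underbar i_* \cX$ are small in $\modcat{\mccM}$ (this is precisely the assumption that $\underbar i_* X$ is small for $X \in \cX$); and the derived unit $X \to \underbar i^* \underbar i_* X$ is a weak equivalence for each $X \in \cX$ (this is the assumed condition — note the statement says ``counit'' but since $i^*$ is the right adjoint the relevant map for this direction is the unit $X \to \underbar i^* \underbar i_* X$, which is what is written). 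Granting these, Proposition~\ref{prop-cell-qe} yields the Quillen equivalence
$$
{\cX}\dcell\modcat{\MD}\simeq_{Q} {\underbar i_*\cX}\dcell\modcat{\mccM}.
$$

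For part (2) the argument is the mirror image: now one starts with a stable set of small cells $\cY$ in $\modcat{\mccM}$, forms $\underbar i^* \cY$ in $\modcat{\MD}$, and applies the other version of the Cellularization Principle, Proposition~\ref{prop-cell-qe}(2), in which one cellularizes the target at $\cY$ and the source at $\underbar i^* \cY$. The hypotheses needed are that $\underbar i^* Y$ is small in $\modcat{\MD}$ (assumed) and that the derived counit $\underbar i_* \underbar i^* Y \to Y$ is a weak equivalence for each $Y \in \cY$ (assumed); cofibrancy of the $\underbar i^* Y$ can be arranged by cofibrant replacement. This gives
$$
{\underbar i^*\cY}\dcell\modcat{\MD}\simeq_{Q}  {\cY}\dcell\modcat{\mccM}.
$$

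The main obstacle I anticipate is purely bookkeeping rather than conceptual: one must make sure that ``small'' is being used consistently (smallness in the diagram category versus in the individual $\mccM(s)$, and with respect to which class of maps), and that the cellular model category hypotheses required by the Cellularization Principle — in particular that the cells form a \emph{set} of cofibrant objects — are genuinely met. This is where invoking Proposition~\ref{prop-injective-model-properties} and the cited Hirschhorn results \cite[15.6.27, 15.7.6]{hh} does the real work, guaranteeing cellularity of the diagram-projective structures so that the right Bousfield localization at $\cX$ (resp.\ at $\underbar i_* \cX$, etc.) exists. Everything else — the Quillen pair, the objectwise nature of weak equivalences and fibrations, the identification of $i_*$ via the colimit formula $i_* X(t) = \colim_{s \in \bfD/t} a_* X(s)$ already recorded above — is routine, and the statement is then immediate from Proposition~\ref{prop-cell-qe}.
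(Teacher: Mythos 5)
Your proposal is correct and follows exactly the route the paper takes: the paper gives no separate proof of Theorem~\ref{thm5.3} beyond the preceding remark that $i^*$ preserves objectwise weak equivalences and fibrations (hence is right Quillen on the diagram-projective structures) and the appeal to the Cellularization Principle, Proposition~\ref{prop-cell-qe}. Your additional observations --- that the map $X \to \underline{i}^*\underline{i}_* X$ is really the derived unit despite being called the counit, and that cellularity/properness of the diagram-projective structure needs either Proposition~\ref{prop-injective-model-properties} or a direct appeal to Hirschhorn --- are accurate and, if anything, slightly more careful than the paper itself.
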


\subsection{The Right Adjoint Case}
The right adjoint case is dual to the left adjoint case above; we spell out some of the details here.
Let $\bfE$ be an {\em inverse} category with a fixed linear extension $d: \bfE^{op} \to \lambda$ for some ordinal $\lambda$. 
Let $i: \bfD \lra \bfE$ be an inclusion of a full subcategory and $\mccM: \bfE \lra \C$ be a diagram of
model categories and Quillen adjunctions as in Section~\ref{subsec:modcatdiag}
and~\ref{subsec:modstructure}.  Restriction of $\mccM$  produces a diagram $\mccM|_{\bfD}: \bfD \lra \C$,
and a restriction functor from $\mccM$-diagrams to $\MD$-diagrams
$$i^*: \modcat{\mccM}\lra \modcat{\mccM|_{\bfD}}. $$  
Assuming that each model category $\mccM(i)$ has all limits, a right adjoint, $i_{!}$, of 
this restriction exists.  Given an $\MD$-diagram $X$, 
we identify $i_{!}X(t)$ as in the end of Section~\ref{subsec:elemequivalences}.
Let $t/\bfD$ be the category of  morphisms $a:t \to s$ in $\bfE$ with codomain $s$ in $\bfD$.
Then 
$$i_{!}X (t) = \ilim_{s \in t/\bfD} a_{*}X(s).$$    
Note that for $a': t' \to t$ in $\bfE$ the structure maps $i_{!}X(t') \to a'^{*}i_{!}X(t)$ can be filled in by the universal property of limits and the compatibility of compositions of arrows in $\bfE$ since $\bfD$ is a full subcategory. 

Since the restriction functor $i^*: \modcat{\mccM}\lra \modcat{\mccM|_{\bfD}}$  preserves objectwise weak equivalences and cofibrations, it is a left Quillen functor on the diagram-injective model structures.  It then induces a Quillen equivalence on the cellularizations under the following conditions by the Cellularization Principle~\ref{prop-cell-qe}.

\begin{thm}\label{thm5.4}
Let $\bfE$ be an inverse category with $i: \bfD \lra \bfE$ an inclusion of a full subcategory and let $\mccM: \bfE \lra \C$ be a diagram of right proper, cellular, stable model categories 
such that each $\mccM(s)$ has all limits.
There is a Quillen adjunction on the diagram-injective model structures.
$$
\adjunction{i^{*}}{\modcat{\mccM}}{\modcat{\MD}}{i_{!}}
$$  
\begin{enumerate}
\item Assume given a stable set of small cells $\cX$ in $\modcat{\mccM}$ such that $\underbar i^*X$ is small and the derived counit 
$X \to \underbar i_{!}\underbar i^{*} X $
 is an equivalence for each $X \in \cX$.
Then $ (i^{*}, i_{!})$ induces a Quillen equivalence on the associated cellularizations.
$$
{\cX}\dcell\modcat{\mccM}\simeq_{Q}  {{\underbar i^{*}\cX}\dcell\modcat{\MD}}
$$  
  
\item Assume given a stable set of small cells $\cY$ in $\modcat{\MD}$ such that $\underbar i^! Y$ is small and the derived counit 
$\underbar i^{*}\underbar i_{!} Y \to Y$
 is an equivalence for each $Y \in \cY$.
Then $ (i^{*}, i_{!})$ induces a Quillen equivalence on the associated cellularizations.
$$
{\underbar i_{!}\cY}\dcell\modcat{\mccM}\simeq_{Q}  {{\cY}\dcell\modcat{\MD}}
$$
\end{enumerate}
\end{thm}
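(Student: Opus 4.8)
The plan is to deduce Theorem~\ref{thm5.4} from the Cellularization Principle (Proposition~\ref{prop-cell-qe}), in a way exactly dual to the derivation of Theorem~\ref{thm5.3}, with the roles of limits and colimits and of cofibrations and fibrations interchanged. First I would record the Quillen adjunction. By Theorem~\ref{thm.injective.model}(ii) the diagram-injective model structures on $\modcat{\mccM}$ and on $\modcat{\MD}$ exist, since $\bfE$ and its full subcategory $\bfD$ are inverse categories. In these structures cofibrations, trivial cofibrations and weak equivalences are detected objectwise, and because $\bfD$ is a \emph{full} subcategory the restriction functor $i^{*}$ merely forgets the entries (and structure maps) at objects of $\bfE$ not lying in $\bfD$; hence $i^{*}$ preserves all three classes and so is a left Quillen functor. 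Its right adjoint $i_{!}$ is the right Kan extension along $i$, which exists because each $\mccM(s)$ has all limits and is given explicitly by $i_{!}X(t)=\ilim_{s\in t/\bfD}a_{*}X(s)$ as above. This establishes the asserted Quillen adjunction $(i^{*},i_{!})$.

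Next I would check that $\modcat{\mccM}$ and $\modcat{\MD}$ satisfy the standing hypotheses of the Cellularization Principle, namely that they are right proper, cellular and stable. Right properness and cellularity are inherited from the individual $\mccM(s)$ by the argument of Proposition~\ref{prop-injective-model-properties} (the inverse-category case): pullbacks, pushouts and the (co)fibrant generators are built objectwise, Reedy fibrations in the inverse case are objectwise fibrations, and one invokes Hirschhorn's treatment of Reedy categories \cite{hh} for the cellularity; here one must also verify this goes through in the slightly larger generality of Theorem~\ref{thm5.4}, which, unlike Proposition~\ref{prop-injective-model-properties}, does not assume at most one morphism between objects. Stability is the point that genuinely needs attention, and is the step I expect to be the main obstacle: one must show that $\Sigma$ and $\Omega$ are inverse autoequivalences of $\Ho(\modcat{\mccM})$ (and likewise for $\modcat{\MD}$). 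Since weak equivalences are objectwise, since pushouts, pullbacks and cofibrations are objectwise, and since objectwise fibres may be computed after a fibrant replacement, homotopy cofibre and homotopy fibre sequences in the diagram category agree objectwise with those in the $\mccM(s)$; hence the loop--suspension adjunction is an objectwise equivalence, and therefore an equivalence on $\Ho(\modcat{\mccM})$, because each $\mccM(s)$ is stable. The only subtlety is that fibrations are not objectwise in the diagram-injective structure, so one works with fibrant replacements; objectwiseness of the weak equivalences is what makes this harmless.

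Granting these preliminaries, both conclusions become formal applications of the Cellularization Principle. For part~(1) I would apply Proposition~\ref{prop-cell-qe}(1) to $(i^{*},i_{!})$ with left adjoint $F=i^{*}$, right adjoint $U=i_{!}$, and the cell set $\cX$ in the domain $\modcat{\mccM}$: the hypotheses there --- $\cX$ a stable set of small cells, $\underbar i^{*}\cX$ consisting of small objects, and the natural map $X\to\underbar i_{!}\underbar i^{*}X$ a weak equivalence for every $X\in\cX$ --- are precisely those assumed, and the conclusion is the stated Quillen equivalence ${\cX}\dcell\modcat{\mccM}\simeq_{Q}{\underbar i^{*}\cX}\dcell\modcat{\MD}$. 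For part~(2) I would instead apply the dual statement Proposition~\ref{prop-cell-qe}(2), with the cell set $\cY$ now in the codomain $\modcat{\MD}$: here $\underbar i_{!}\cY$ must consist of small objects and the derived counit $\underbar i^{*}\underbar i_{!}Y\to Y$ must be a weak equivalence for every $Y\in\cY$, again exactly as assumed, yielding ${\underbar i_{!}\cY}\dcell\modcat{\mccM}\simeq_{Q}{\cY}\dcell\modcat{\MD}$.
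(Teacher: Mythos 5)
Your proposal is correct and follows essentially the same route as the paper, which simply observes that $i^{*}$ preserves objectwise weak equivalences and cofibrations (hence is left Quillen for the diagram-injective structures) and then invokes the Cellularization Principle, Proposition~\ref{prop-cell-qe}(1) and (2), exactly as you do. You in fact supply more detail than the paper does --- notably the verification that the diagram categories inherit right properness, cellularity and stability, and the observation that Proposition~\ref{prop-injective-model-properties} is stated under the extra hypothesis of at most one morphism between objects --- points the paper leaves implicit.
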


\subsection{Inverse limit example revisited}\label{sec-inv-adjoint}
In Proposition~\ref{prop-gen-pb} a zig-zag of Quillen equivalences was used to produce a model for the category of modules over an inverse limit ring as the cellularization of a category of modules over the underlying diagram of rings.  We explain here that the second step in that zig-zag can be constructed as a combination of the left and right adjoint cases discussed above. 

As in Section \ref{sec-inverse}, assume given a finite, inverse category $\ccI$ with at most one morphism in each $\ccI(s,t)$ and a diagram of ring spectra, $\mccR$, indexed on 
$\ccI$.  Note that a finite, inverse category is also a direct category.
 Let $\ccIp$ be the category $\ccI$ with one added object $z$ and one
 morphism from $z$ to each object in $\ccI$ so that $z$ is an initial
 object in $\ccIp$.  Let $+$ denote the category with one object and
 one morphism. We next consider the right and left adjunction theorems above applied to the inclusions of $\ccI$ and $+$ into $\ccIp$. 

 Let $\fmccR$ be the fibrant replacement of $\mccR$  and let $\hat R$ denote its inverse limit as in 
 Section \ref{sec-inverse}. We extend the diagram $\fmccR$ on $\ccI$ to the diagram $\fRp$ on $\ccIp$ 
 such that $\fRp(z) = \hat R$.  By Theorem~\ref{thm5.4} inclusion $i: \ccI \to \ccIp$ induces a 
 Quillen adjunction $(i^*, i_!)$ on the diagram-injective model structures between $\fRp$ and $\fmccR$-
 modules.  Consider the diagram $\fmccR$ in $\fmccR$-modules and its image ${\mathit \underbar i_!} \fmccR$ 
 in $\fRp$-modules.  Since $\fmccR$ is fibrant, ${\mathit \underbar i_!} \fmccR (z) = \hat R$ and ${\mathit \underbar 
 i_!} \fmccR = \fRp$.  So Theorem~\ref{thm5.4}(2) implies the following.
  
 \begin{cor}\label{cor.1}
The adjoint functors $ (i^{*}, i_{!})$ induce a Quillen equivalence on the associated cellularizations.
$$
\fRp\dcell\modcat{\fRp}\simeq_{Q}  \fmccR\dcell\modcat{\fmccR}
$$
\end{cor}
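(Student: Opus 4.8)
The plan is to recognize Corollary~\ref{cor.1} as a direct application of Theorem~\ref{thm5.4}(2) to the inclusion $i: \ccI \to \ccIp$, so the work consists of checking that the diagram $\fmccR$ (viewed as an $\fmccR$-module over $\ccI$) satisfies the hypotheses of part (2) and that its image under $\underbar i_!$ is $\fRp$. First I would set $\cY = \{\fmccR\}$ (together with all its suspensions and desuspensions, to make it a stable set of cells) inside $\modcat{\MD} = \modcat{\fmccR}$; this is legitimate since $\fmccR$ is cofibrant as an $\fmccR$-module — indeed it is the free $\fmccR$-module of rank one — and $\modcat{\fmccR}$ with its diagram-injective structure is stable because each $R(s)$-module category is. Smallness of $\underbar i^! \fmccR$ is inherited from the smallness argument already given in the proof of Proposition~\ref{prop-gen-pb}: the finiteness of $\ccI$ (hence of $\ccIp$) ensures finite homotopy limits commute with infinite coproducts, so the entries of the diagram remain small.

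The key computational step is identifying $\underbar i_! \fmccR$. Since $\fmccR$ is diagram-injective fibrant as a diagram of ring spectra over $\ccI$, it is in particular fibrant as an $\fmccR$-module, so the derived right adjoint $\underbar i_!$ agrees with the underived $i_!$ applied to $\fmccR$. By the explicit formula in Section~\ref{sec:adjoints} (the Right Adjoint Case), $i_! \fmccR (t) = \ilim_{s \in t/\ccI} a^* \fmccR(s)$; for $t \in \ccI$ the identity arrow makes $t$ initial in $t/\ccI$, so $\underbar i_! \fmccR$ is unchanged on $\ccI$, while at the new initial object $z$ one gets $\ilim_{s \in z/\ccI} \fmccR(s) = \ilim_\ccI \fmccR = \hat R$. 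Hence $\underbar i_! \fmccR = \fRp$, and in particular the counit $\underbar i^* \underbar i_! \fmccR \to \fmccR$ is the identity, so it is certainly a weak equivalence — verifying the derived counit condition in Theorem~\ref{thm5.4}(2).

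With these checks in place, Theorem~\ref{thm5.4}(2) applies verbatim with $\cY = \fmccR$ and yields a Quillen equivalence
$$
\underbar i_! \fmccR \dcell \modcat{\fRp} \simeq_Q \fmccR \dcell \modcat{\fmccR},
$$
and substituting $\underbar i_! \fmccR = \fRp$ gives the statement. I expect the only real subtlety to be bookkeeping around the two model structures in play: $\ccIp$ inherits both a direct and an inverse structure (as noted, a finite inverse category is also direct), and one must be sure the diagram-injective structure used in Theorem~\ref{thm5.4} is the relevant one and that $\fmccR$'s fibrancy as a diagram of rings transfers to fibrancy as a module, so that $\underbar i_!$ may be computed without first taking a fibrant replacement. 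Once that point is settled, everything else is a direct citation of Theorem~\ref{thm5.4}(2) and the smallness discussion already carried out in the proof of Proposition~\ref{prop-gen-pb}.
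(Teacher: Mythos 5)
Your proposal is correct and follows essentially the same route as the paper: apply Theorem~\ref{thm5.4}(2) with $\cY=\{\fmccR\}$, use the fibrancy of $\fmccR$ to identify $\underbar i_!\fmccR(z)=\hat R$ and hence $\underbar i_!\fmccR=\fRp$, observe the derived counit is an equivalence because $i_!$ leaves the entries over $\ccI$ unchanged, and import the smallness of $\fmccR$ and $\fRp$ from the argument in Proposition~\ref{prop-gen-pb}. The paper's proof is just a terser version of exactly these checks.
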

 
 \begin{proof}
 The restriction ${\mathit \underbar i^*}{\mathit \underbar i_!} \fmccR = {\mathit \underbar i^*} \fRp$ is weakly equivalent to $\fmccR$.  The same argument that $\mccR$ is small in the proof of 
 Proposition~\ref{prop-gen-pb} also shows that $\fRp$ and $\fmccR$ are small in $\fRp$ and $\fmccR$-modules.  
\end{proof}

Next we consider the inclusion of $+$ in $\ccIp$ with image the object $z$.  Again we consider modules over the diagram of rings $\fRp$ on $\ccIp$.  Restricting to $+$, $\mathit \underbar i^* \fRp (z) = \hat R$.  
By Theorem~\ref{thm5.3} inclusion $i: + \to \ccIp$ induces a 
 Quillen adjunction $(i_*, i^*)$ on the diagram-projective model structures between $\hat R$ and $\fRp$-modules.  Next we note that $\mathit \underbar i_* \hat R (s)$ is weakly equivalent to $\fmccR (s)$ for any object $s$ in $\ccIp$.  So Theorem~\ref{thm5.3}(1) implies the following.
 
  \begin{cor}\label{cor.2}
The adjoint functors $ (i_{*}, i^{*})$ induce a Quillen equivalence on the associated cellularizations.
$$
\hat R\dcell\modcat{\hat R}\simeq_{Q}  \fRp\dcell\modcat{\fRp}
$$
\end{cor}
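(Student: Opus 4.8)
The plan is to read off Corollary~\ref{cor.2} from Theorem~\ref{thm5.3}(1), applied to the inclusion $i\colon + \to \ccIp$ of the full subcategory on the initial object $z$, with the diagram of model categories $\mccM$ on $\ccIp$ given by $\mccM(s) = \modcat{\fRp(s)}$; then $\mccM|_{+}$-diagrams are $\hat R$-modules and $\mccM$-diagrams are $\fRp$-modules. First I would record that the hypotheses of the theorem are met: $\ccIp$ is a direct category (a finite inverse category is direct, and we place $z$ at the bottom of a linear extension), and each $\mccM(s)$ is a right proper, cellular, stable model category of module spectra with all colimits. For the stable set of cells I would take $\cX$ to consist of all suspensions and desuspensions of $\hat R$ in $\modcat{\hat R}$.

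Next I would compute $\underbar i_{*}$ on this set. Since $z$ is initial in $\ccIp$, the comma category $+/t$ has a single object $a_{t}\colon z \to t$ for every $t$, so $i_{*}X(t) = (a_{t})_{*}X$; as $\hat R$ is already cofibrant in $\modcat{\hat R}$ this gives that $\underbar i_{*}\hat R$ has value $\hat R$ at $z$ and value $\fmccR(s)\sm_{\hat R}\hat R \simeq \fmccR(s)$ at $s \in \ccI$, so $\underbar i_{*}\hat R$ is objectwise weakly equivalent to $\fRp$. For the derived-counit condition I would observe that $\mathrm{id}_{z}$ is a terminal object of $+/z$, so the underived unit $\hat R \to i^{*}i_{*}\hat R$ is an isomorphism (as recorded in Section~\ref{subsec:elemequivalences}); since $i^{*}$ is evaluation at $z$ and therefore preserves all weak equivalences, the derived counit $\hat R \to \underbar i^{*}\underbar i_{*}\hat R$ is a weak equivalence, and the same holds for its suspensions and desuspensions by stability.

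The step carrying the real weight is the smallness hypothesis. That $\hat R$ is small in $\modcat{\hat R}$ is immediate, as it is the free module on one generator. For $\underbar i_{*}\hat R$, being objectwise weakly equivalent to $\fRp$, I would invoke the argument already given for $\fRp$ in the proof of Corollary~\ref{cor.1} --- namely the finite-homotopy-colimit argument from the proof of Proposition~\ref{prop-gen-pb}: $\fRp$ is a finite homotopy colimit of the small modules $L^{s}\fRp(s)$, and a finite homotopy colimit of small objects is small. Since cellularization depends only on the weak-equivalence class of the chosen cells, we then have $\underbar i_{*}\cX\dcell\modcat{\fRp} = \fRp\dcell\modcat{\fRp}$, and Theorem~\ref{thm5.3}(1) yields
\[ \cX\dcell\modcat{\hat R} \simeq_{Q} \underbar i_{*}\cX\dcell\modcat{\fRp} = \fRp\dcell\modcat{\fRp}. \]
Finally, since $\hat R$ already cofibrantly generates $\modcat{\hat R}$, the cellularization on the left leaves the model structure unchanged, which is the form of the statement. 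The only real obstacle is thus the smallness of $\underbar i_{*}\hat R \simeq \fRp$, and that has in effect already been carried out in the proof of Corollary~\ref{cor.1}; the rest is bookkeeping with the explicit description of $i_{*}$ and the comma categories $+/t$.
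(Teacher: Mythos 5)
Your proposal is correct and follows essentially the same route as the paper: the paper also deduces Corollary~\ref{cor.2} from Theorem~\ref{thm5.3}(1) applied to the inclusion $+\to\ccIp$ hitting the initial object $z$, using that $\underbar i_{*}\hat R(s)\simeq\fmccR(s)$ (so $\underbar i_{*}\hat R\simeq\fRp$) and that smallness of $\fRp$ follows from the finite-homotopy-colimit argument already given for $\mccR$ in Proposition~\ref{prop-gen-pb}. Your extra details (the unit being an isomorphism since $+/z$ has $\mathrm{id}_z$ terminal, and the left-hand cellularization being redundant) match the paper's surrounding remarks.
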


As in the end of the proof of Proposition~\ref{prop-gen-pb}, we note here that $\hat R$ already generates $\hat R$-modules so the cellularization on the left is unnecessary in Corollary~\ref{cor.2}.  Putting together Corollaries~\ref{cor.1} and \ref{cor.2}, and using the Quillen equivalence between the diagram-projective and diagram-injective model structures on $\fRp\dcell\modcat{\fRp}$, gives a zig-zag of Quillen equivalences between ${\hat R}$-modules and $ \fmccR\dcell{\fmccR}$-modules. As in the proof of Proposition~\ref{prop-gen-pb}, Lemma~\ref{lem.ext.scalars} and Corollary~\ref{cor-cell-qe} then show that $\fmccR$ can be replaced by $\mccR$.

\begin{remark}
One could consider a given general diagram $\mccR$ on $\ccI$ instead of $\fmccR$ and proceed as in Corollary~\ref{cor.1}.  To extend this to a diagram on $\ccIp$ one would need to use the inverse limit $\mccR'$ of $\mccR$ at the object $z$ in $\ccIp$.  The derived functor $\mathit \underline i_!$ evaluated at $z$ applied to $\mccR$ would be $\hat R$, the homotopy limit of $\mccR$ or the inverse limit of a fibrant replacement of $\mccR$.    Here the limit, $\mccR'$, and the homotopy limit, $\hat R$, are not weakly equivalent in general. This would cause problems for the steps in Corollary~\ref{cor.2}, because using restriction would force one to work over $\mccR'$-modules.
\end{remark}

\begin{appendix}


\section{Cellularization of model categories}\label{app:cell}
 
The Quillen equivalences developed in the body of this paper 
rely on the process of cellularization (also known as right
localization or colocalization) of model categories from~\cite{hh}. In~\cite{GScell}, we show that Quillen adjunctions between stable model categories induce Quillen equivalences between their respective cellularizations provided there is an equivalence on the chosen cells.  

Here we always consider stable cellularizations of stable
model categories.  We say a set $\cK$ is {\em stable} if it is closed under suspension and desuspension up to weak equivalence. Given a stable model category $\mccM$ and $\cK$ a stable set of  objects, $\cK$-cell-$\mccM$ is again a stable model category by~\cite[4.6]{BR.stable}. 
We say an object $K$ is small in
the homotopy category (simply {\em small} elsewhere in the paper)
if,  for any set of objects
$\{Y_{\alpha}\}$, the natural map $\bigoplus_{\alpha} [K,
Y_{\alpha}]\lra [K,\bigvee_{\alpha}Y_{\alpha}]$ is an isomorphism. 

\begin{prop} \cite{GScell}
\label{prop-cell-qe}
Let $\mccM$ and $\cN$ be  stable, cellular, right proper model
categories with $F: \mccM \to \cN$ a left Quillen 
functor with right adjoint $U$.   
Denote the associated derived functors by $\underbar F$ and $\underbar U$.
\begin{enumerate}
\item Given $\cK= \{ A_{\alpha} \}$ a stable set of small objects in $\mccM$, let $\underbar F\cK = \{ \underbar F A_{\alpha}\}$ be the corresponding
set of objects in $\cN$.  If for each $A_{\alpha}$ the image $\underbar F A_{\alpha}$ is small and $A_{\alpha} \to \underbar U \underbar F A_{\alpha}$ is a weak equivalence,
then the $\cK$-cellularization
of $\mccM$ and the $\underbar F\cK$-cellularization of $\cN$  are Quillen equivalent: 
\[ \mbox{$\cK$-cell-$\mccM$}\simeq_Q \mbox{$\underbar F\cK$-cell-$\cN$} \]
\item Given $\mcL = \{B_{\beta}\}$ a stable set of small objects in $\cN$, let $\underbar U\mcL = \{\underbar U B_{\beta}\}$ be the corresponding
set of objects in $\cN$.  If for each $B_{\beta}$ the image $\underbar U B_{\beta}$ is small and  $\underbar F \underbar U B_{\beta} \to B_{\beta} $ is a weak equivalence,
 then the $\mcL$-cellularization
of $\cN$ and the $\underbar U\mcL$-cellularization of $\mccM$
 are Quillen equivalent: 
\[ \mbox{$\underbar U\mcL$-cell-$\mccM$} \simeq_{Q} \mbox{$\mcL$-cell-$\cN$}\]
\end{enumerate} 
\end{prop}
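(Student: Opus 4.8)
The plan is to reduce the statement to two standard features of right Bousfield localization: passing from $\mccM$ to $\cK$-cell-$\mccM$ leaves the fibrations (and hence the fibrant objects) unchanged, enlarges the weak equivalences to the $\cK$-colocal equivalences, and shrinks the cofibrations (so a cofibrant object of $\cK$-cell-$\mccM$ is in particular $\mccM$-cofibrant). The cellularizations exist since $\mccM$ and $\cN$ are right proper and cellular \cite{hh}, and are stable by \cite{BR.stable}; moreover $\underbar F$ and $\underbar U$ are exact functors of the triangulated homotopy categories, and $\underbar F$ preserves coproducts. Because $\cK$ is a stable set, a map of $\cN$ is an $\underbar F\cK$-colocal equivalence exactly when it induces an isomorphism on $[\underbar F A,-]$ for every $A\in\cK$, and similarly for $\cK$-colocal equivalences in $\mccM$. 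The two parts are formally dual, so I would prove (1) in detail and indicate the changes for (2).

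\textbf{Step 1: the Quillen adjunction.} I would first check that $(F,U)$ descends to a Quillen adjunction between $\cK$-cell-$\mccM$ and $\underbar F\cK$-cell-$\cN$ by showing that $U$ is right Quillen for the localized structures. It preserves fibrations (these are unchanged), and it preserves trivial fibrations: a trivial fibration in $\underbar F\cK$-cell-$\cN$ is an $\cN$-fibration that is an $\underbar F\cK$-colocal equivalence, and since the derived adjunction gives $[A,\underbar U(-)]\cong[\underbar F A,-]$ for $A\in\cK$, applying $U$ produces an $\mccM$-fibration that is a $\cK$-colocal equivalence. Hence $(F,U)$ is a Quillen pair on the cellularizations. (For part (2) the same argument works, with $[A,\underbar U(-)]$ replaced by the chain $[B,-]\cong[\underbar F\underbar U B,-]\cong[\underbar U B,\underbar U(-)]$ for $B\in\mcL$, whose first isomorphism is the counit hypothesis.)

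\textbf{Step 2: reduction to a unit computation.} I would then apply the criterion \cite{hovey-model} that a Quillen pair is a Quillen equivalence iff for every cofibrant $X$ in $\cK$-cell-$\mccM$ and every fibrant $Y$ in $\underbar F\cK$-cell-$\cN$, a map $FX\to Y$ is a weak equivalence precisely when its adjoint $X\to UY$ is. Here $X$ is $\mccM$-cofibrant and $Y$ is $\cN$-fibrant, so $FX$ represents $\underbar F X$ and $UY$ represents $\underbar U Y$. By the adjunction isomorphism of Step 1, ``$FX\to Y$ is an $\underbar F\cK$-colocal equivalence'' asserts that $[A,\underbar U\underbar F X]\to[A,\underbar U Y]$ is an isomorphism for all $A\in\cK$, while ``$X\to UY$ is a $\cK$-colocal equivalence'' asserts that $[A,X]\to[A,\underbar U Y]$ is an isomorphism for all $A\in\cK$. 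Since $X\to\underbar U Y$ factors as the unit $\eta_X\colon X\to\underbar U\underbar F X$ followed by $\underbar U$ of $\underbar F X\to Y$, the two conditions coincide once $\eta_X$ induces an isomorphism on $[A,-]$ for all $A\in\cK$, i.e.\ once $\eta_X$ is itself a $\cK$-colocal equivalence.

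\textbf{Step 3: the cellular bootstrap, which I expect to be the crux.} It remains to prove that $\eta_X$ is a $\cK$-colocal equivalence for every cofibrant $X$ of $\cK$-cell-$\mccM$. Such an $X$ is a $\cK$-cellular object, hence lies in the localizing subcategory $\langle\cK\rangle$ of $\Ho(\mccM)$ generated by $\cK$ \cite{BR.stable}. So it is enough to show that the class $\mathcal{S}$ of those $W\in\Ho(\mccM)$ for which $\eta_W$ is a $\cK$-colocal equivalence is a localizing subcategory containing $\cK$. It contains $\cK$ by hypothesis, as $A_\alpha\to\underbar U\underbar F A_\alpha$ is assumed to be a weak equivalence. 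It is closed under triangles and retracts because $\underbar F$ and $\underbar U$ are exact, by the five lemma applied to the long exact sequences of $[A,-]$ and $[A,\underbar U\underbar F(-)]$. It is closed under coproducts: for $W=\bigvee_i W_i$ with each $W_i\in\mathcal{S}$, one has $[A,\bigvee_i W_i]\cong\bigoplus_i[A,W_i]$ since $A$ is small, and $[A,\underbar U\underbar F\bigvee_i W_i]\cong[\underbar F A,\bigvee_i\underbar F W_i]\cong\bigoplus_i[\underbar F A,\underbar F W_i]\cong\bigoplus_i[A,\underbar U\underbar F W_i]$, using that $\underbar F$ preserves coproducts and that $\underbar F A$ is small --- this is precisely where both smallness hypotheses are used. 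Hence $\mathcal{S}\supseteq\langle\cK\rangle$, so $\eta_X$ is a $\cK$-colocal equivalence and part (1) follows. Part (2) is dual: interchange $(\mccM,\cK)$ with $(\cN,\mcL)$ and $F$ with $U$, replace $\eta$ by the counit $\varepsilon\colon\underbar F\underbar U B\to B$ and use the resulting identification $[\underbar U B,\underbar U\underbar F X]\cong[B,\underbar F X]$, and run the same bootstrap, the coproduct-closure step now invoking that both $B_\beta$ and $\underbar U B_\beta$ are small. The main obstacle is Step 3 --- recognizing cofibrant objects of the cellularization as lying in $\langle\cK\rangle$ and then checking coproduct-closure of $\mathcal{S}$, where the smallness hypotheses cannot be dispensed with; Steps 1 and 2 are routine manipulations with derived adjunctions.
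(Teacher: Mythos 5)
The paper does not prove this proposition itself --- it is quoted from \cite{GScell} --- and your argument is correct and is essentially the argument given there: descend the Quillen pair to the cellularizations (your Step 1 is the dual half of \cite[3.3.20]{hh}, which also takes care of the point-set versus derived issue for $U$ applied to non-fibrant trivial fibrations), reduce the Quillen-equivalence criterion to showing the derived unit is a $\cK$-colocal equivalence on cofibrant colocal objects, and then run the localizing-subcategory bootstrap, where the smallness of both $A_\alpha$ and $\underbar F A_\alpha$ enters exactly as you say. No substantive differences from the cited proof.
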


If $F$ and $U$ induce a Quillen equivalence on the original categories,
then the hypotheses in Proposition \ref{prop-cell-qe} are automatically
satisfied.   Thus, the cellularizations are Quillen equivalent. 
 
\begin{cor}\cite{GScell}\label{cor-cell-qe}
Let $\mccM$ and $\cN$ be stable, 
cellular, right proper model
categories with $F: \mccM \to \cN$ a Quillen 
equivalence with right adjoint $U$.  
Denote the associated derived functors by $\underbar F$ and $\underbar U$.
\begin{enumerate}
\item Given $\cK= \{ A_{\alpha} \}$ a stable set of small objects in $\mccM$, let $\underbar F\cK = \{ \underbar F A_{\alpha}\}$ be the corresponding
set of objects in $\cN$. Then the $\cK$-cellularization
of $\mccM$ and the $\underbar F\cK$-cellularization of $\cN$  are Quillen equivalent: 
\[ \mbox{$\cK$-cell-$\mccM$}\simeq_Q \mbox{$\underbar F\cK$-cell-$\cN$} \]
\item Given $\mcL = \{B_{\beta}\}$ a stable set of small objects in $\cN$, let $\underbar U\mcL = \{\underbar U B_{\beta}\}$ be the corresponding
set of objects in $\cN$.  Then the $\mcL$-cellularization
of $\cN$ and the $\underbar U\mcL$-cellularization of $\mccM$
 are Quillen equivalent: 
\[ \mbox{$\underbar U\mcL$-cell-$\mccM$} \simeq_{Q} \mbox{$\mcL$-cell-$\cN$}\]
\end{enumerate} 
\end{cor}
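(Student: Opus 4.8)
The plan is to obtain this corollary as an immediate consequence of Proposition~\ref{prop-cell-qe}, by checking that when $(F,U)$ is a Quillen equivalence the two extra hypotheses appearing there are satisfied automatically. For part~(1) I must verify, for each $A_\alpha$ in the stable set $\cK$, that $\underbar F A_\alpha$ is small in $\Ho(\cN)$ and that the derived unit $A_\alpha \to \underbar U\underbar F A_\alpha$ is a weak equivalence; Proposition~\ref{prop-cell-qe}(1) then delivers $\mbox{$\cK$-cell-$\mccM$}\simeq_Q\mbox{$\underbar F\cK$-cell-$\cN$}$. Part~(2) is the formal dual: apply the same reasoning with the roles of $\mccM$ and $\cN$ interchanged, or equivalently invoke Proposition~\ref{prop-cell-qe}(2) after checking that $\underbar U B_\beta$ is small and the derived counit $\underbar F\underbar U B_\beta \to B_\beta$ is a weak equivalence.

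Here I would just recall that the weak-equivalence conditions are exactly the statement that the derived unit and counit of a Quillen equivalence are isomorphisms in the homotopy categories: by the standard characterization of a Quillen equivalence, $\underbar F$ and $\underbar U$ descend to mutually inverse equivalences $\Ho(\mccM)\simeq\Ho(\cN)$, under which the unit $A\to\underbar U\underbar F A$ and counit $\underbar F\underbar U B\to B$ are natural isomorphisms; on cofibrant (resp.\ fibrant) objects these are represented by precisely the maps named in the hypotheses of Proposition~\ref{prop-cell-qe}, so there is nothing further to check there.

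For the smallness transfer I would argue as follows. An equivalence of categories preserves every colimit that exists, so both $\underbar F$ and $\underbar U$ preserve coproducts; in particular $\underbar U$ carries a wedge $\bigvee_\alpha Y_\alpha$ in $\Ho(\cN)$ to $\bigvee_\alpha \underbar U Y_\alpha$ in $\Ho(\mccM)$. Combining this with the adjunction isomorphism $[\underbar F A,-]\cong[A,\underbar U-]$ and the smallness of $A$ in $\Ho(\mccM)$ produces a natural chain $[\underbar F A,\bigvee_\alpha Y_\alpha]\cong[A,\bigvee_\alpha\underbar U Y_\alpha]\cong\bigoplus_\alpha[A,\underbar U Y_\alpha]\cong\bigoplus_\alpha[\underbar F A,Y_\alpha]$, and one checks that this composite is the canonical comparison map; hence $\underbar F A$ is small in $\Ho(\cN)$. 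The stability hypothesis enters here only to guarantee that $\underbar F\cK$ is again a stable set --- since $\underbar F$ commutes with suspension --- so that the $\underbar F\cK$-cellularization is defined and, by \cite[4.6]{BR.stable}, is itself a stable model category. The same computation with $F$ and $U$ swapped shows $\underbar U B_\beta$ is small, supplying the remaining hypothesis for part~(2).

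I do not anticipate a genuine obstacle: the argument is entirely formal, and the one point demanding a little care is confirming that the isomorphism assembled in the smallness step really is the canonical map $\bigoplus_\alpha[\underbar F A,Y_\alpha]\to[\underbar F A,\bigvee_\alpha Y_\alpha]$, rather than merely some abstract isomorphism of abelian groups --- a routine naturality check using the triangle identities for the derived adjunction.
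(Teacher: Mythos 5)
Your proposal is correct and follows the same route as the paper, whose entire justification is the remark preceding the corollary: when $(F,U)$ is a Quillen equivalence the hypotheses of Proposition~\ref{prop-cell-qe} (derived unit/counit equivalences and preservation of smallness) hold automatically. You simply spell out the details the paper leaves implicit --- the unit/counit condition from the standard characterization of Quillen equivalences, and the smallness transfer via the derived adjunction together with preservation of coproducts by the derived equivalence --- so there is nothing to add.
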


This corollary also follows from the dual of~\cite[2.3]{hovey-stable}. 
\end{appendix}


\end{document}